\newtheorem{Theorem}{Theorem}
\newtheorem{Lemma}{Lemma}
\newtheorem{Proposition}{Proposition}
\newtheorem{Definition}{Definition}
\newtheorem{Corollary}{Corollary}
\theoremstyle{remark}
\newtheorem{Remark}{Remark}
\theoremstyle{remark}
\theoremstyle{plain}
\theoremstyle{remark}
\newtheorem{Example}{Example}
\newcommand{\R}{{\mathbb{R}}}
\newcommand{\Z}{{\mathbb{Z}}}
\newcommand {\N}{{\mathbb{N}}}
\newcommand{\s}{{\mathbb S}}
\begin{document}

\title{A General Method to Determine Limiting Optimal Shapes for Edge-Isoperimetric Inequalities}

\author{Emmanuel Tsukerman \thanks{Supported by the National Science Foundation Graduate Research Fellowship under Grant No.  DGE 1106400.  Any opinion, findings, and conclusions or recommendations expressed
in this material are those of the authors(s) and do not necessarily reflect the views of the National Science
Foundation.} \\
\small University of California, Berkeley \\
\and
Ellen Veomett \\
\small Saint Mary's College of California}

\maketitle

\begin{abstract}
For a general family of graphs on $\Z^n$, we translate the edge-isoperimetric problem into a continuous isoperimetric problem in $\R^n$.  We then solve the continuous isoperimetric problem using the Brunn-Minkowski inequality and Minkowski's theorem on Mixed Volumes.  This translation allows us to conclude, under a reasonable assumption about the discrete problem, that the shapes of the optimal sets in the discrete problem approach the shape of the optimal set in the continuous problem as the size of the set grows.  The solution is the zonotope defined as the Minkowski sum of the edges of the original graph.  

We demonstrate the efficacy of this method by revisiting some previously solved classical edge-isoperimetric problems.   We then apply our method to some discrete isoperimetric problems which had not previously been solved.   The complexity of those solutions suggest that it would be quite difficult to find them using discrete methods only.
\end{abstract}

\section{Introduction}

For a space with some notion of ``volume'' and ``boundary'', an isoperimetric inequality gives an upper bound on the volume of a set of fixed boundary.  For example, one can consider Euclidean space $\R^n$ where ``volume'' is the usual notion of Lebesgue measure, and ``boundary'' is the usual notion of the boundary.  That is, for $X\subset \R^n$, the boundary of $X$ is defined:
\begin{equation*}
 \lim_{\epsilon \to 0^+} \frac{\text{Vol}\left(X+\epsilon B\right) - \text{Vol}(X)}{\epsilon}
\end{equation*}
where $B$ is the Euclidean ball of radius 1 and $X+\epsilon B$ refers to the Minkowski sum:
\begin{align*}
B &= \{x \in \R^n: ||x||_2 \leq 1\} \\
X + \epsilon B &= \{x+ \epsilon y: x \in X, y \in B\}
\end{align*}

The well-known Euclidean isoperimetric inequality states that among all sets with a fixed boundary, the corresponding Euclidean ball has the greatest volume.  This is equivalent to saying that among all sets with a fixed volume, the corresponding Euclidean ball has the smallest boundary.  

One can similarly define an isoperimetric inequality for any graph.  Given a  simple undirected graph $G = (V,E)$, we say that the \emph{volume} of a set $S \subset V$ is simply the number of vertices in that set: $|S|$.  The \emph{boundary} of that set can be calculated in one of two ways: using the edge boundary or the vertex boundary.

\begin{Definition}
The \emph{vertex boundary} $\partial(S)$  of a set $S \subset V$ is the set of vertices in $V \backslash S$ which are adjacent to some vertex in $S$:
\begin{equation*}
\partial(S) = \{v \in V\backslash S: (v,u) \in E \text{ for some } u \in S\}
\end{equation*}
Thus, the \emph{size} of the vertex boundary is $|\partial(S)|$.  

The \emph{edge boundary} $\partial_e(S)$ of a set $S \subset V$ is the set of edges $(u,v) \in E$ ``exiting'' the set $S$:
\begin{equation*}
\partial_e(S) = \{(u,v) \in E: \left| \{u,v\} \cap S\right|= 1\}
\end{equation*}
Thus, the \emph{size} of the edge boundary is $|\partial_e(S)|$.  
\end{Definition}

In the discrete case, the isoperimetric inequality is usually stated in terms of fixing the volume and finding the set of smallest boundary.

Both vertex and edge-isoperimetric inequalities on graphs have been studied for various families of graphs.  Vertex-isoperimetric inequalities are studied, for example, in \cite{DiscTor, MR1082843, MR0200192, MR0168489, MR1612869, MR2946103} and edge-isoperimetric inequalities in 
\cite{MR2346808, SamorodnitskyHammingCube,  MR1137765, MR1909858, MR3436548}.   Some general techniques for solving discrete isoperimetric inequalities have been developed, including compression and stabilization \cite{MR2035509}.  

While most of the papers on discrete isoperimetric inequalities study the discrete problems directly,  in \cite{MR1137765} the authors use a continuous formulation of the discrete question to solve the discrete problem.  In this paper, we discuss a general method which can be used to translate a discrete isoperimetric inequality into a continuous one.  We then solve the continuous isoperimetric inequality, and apply this technique to both graphs whose isoperimetric inequality was previously known and graphs whose isoperimetric inequality was not previously known.

More specifically, we introduce the following definition:
\begin{Definition}
A simple graph  $G = (V,E)$ is called a PL graph (Primitive Lattice graph) if it satisfies the following: 
\begin{itemize}
\item   $V = \Z^n$
\item There exist integer vectors $v_1, v_2, \dots, v_k$ (with $v_i \not= -v_j$ for any $i, j$) such that for any $u \in \Z^n$ the edges in $E$ involving $u$ are precisely the edges:
\begin{equation*}
(u,u\pm v_1), (u, u\pm v_2), (u, u\pm v_3), \dots, (u, u\pm v_k)
\end{equation*}
\item For each integer vector  $v_i = (v_{i1}, v_{i2}, \dots, v_{in})$ above, the entries $\{v_{i1}, v_{i2}, \dots, v_{in}\}$ are relatively prime (primitive). 
\item The span of $\{v_1, v_2, \dots, v_k\}$ is $\R^n$.
\end{itemize}
\end{Definition}
We note that the above conditions imply that $G$ is regular of degree $2k$, and any translation mapping $\Z^n$ to itself is an isomorphism of this graph to itself.  The last condition implies that the graph is ``full dimensional'' and appropriately lives in $\Z^n$ (as opposed to $\Z^\ell$ for some $\ell<n$).  

For any PL graph, we also define the following:
\begin{Definition}
Suppose $G = (V,E)$ is a PL graph whose edges are given by the vectors $v_1, v_2, \dots, v_k$.  Then the \emph{edge segments} $\ell_i, i=1, 2, \dots, k$ of $G$ are the line segments from the origin to $v_i$ for each $i$:
\begin{equation*}
\ell_i = \{t \vec{0} + (1-t) v_i: t \in [0,1]\}
\end{equation*}
\end{Definition}

We now have the following Lemma, which will be proved in Subsection \ref{BoundarySubSection}:

\begin{Lemma}\label{ImportantLemma}
Let $G = (\Z^n,E)$ be a PL graph.  Let $\ell_1, \ell_2, \dots, \ell_k$ be the edge segments of $G$.  Let Z be the zonotope
\begin{equation*}
Z = \sum_{i=1}^k \left(-\ell_i + \ell_i\right)
\end{equation*}
where all sums are the Minkowski sum.  Let $\mu_n$ denote the Lebesgue measure on $\R^n$ and let $b$ be the real-valued function on sets $A \subset \R^n$ defined by:
\begin{equation*}
b(A) = \lim_{\epsilon \to 0^+} \frac{\mu_n\left(A+\epsilon Z \right) - \mu_n\left(A\right)}{\epsilon}
\end{equation*}
where $A+\epsilon Z$ is the Minkowski sum.

Then for any convex set $X \subset \R^n$, 
 we  have for $\alpha \in \R$
\begin{align*}
\lim_{\alpha \to \infty}\frac{ \mu_n(\alpha X)}{|\Z^n \cap \alpha X|}  &= 1 \quad \quad \quad \text{ and } \\
\lim_{\alpha \to \infty}\frac{b(\alpha X )}{|\partial_e(\Z^n \cap \alpha X)|}  &= 1
\end{align*}
\end{Lemma}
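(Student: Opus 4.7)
The plan is to establish each limit by reducing the lattice quantities to continuous volumes: standard lattice point counting handles the first, and Minkowski's theory of mixed volumes combined with a fibration argument handles the second. For the first limit, the classical lattice asymptotic for a convex body with nonempty interior gives $|\Z^n \cap \alpha X| = \mu_n(X)\alpha^n + O(\alpha^{n-1})$, while $\mu_n(\alpha X) = \mu_n(X)\alpha^n$, so the ratio is $1 + O(1/\alpha) \to 1$.

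For the second limit I would first compute $b(\alpha X)$ in closed form. Because $\alpha X$ is convex, the Minkowski polynomial expansion
\begin{equation*}
\mu_n(\alpha X + \epsilon Z) = \sum_{j=0}^n \binom{n}{j} \epsilon^j\, V((\alpha X)[n-j], Z[j])
\end{equation*}
(where $V$ denotes mixed volume) identifies $b(\alpha X) = n\, V((\alpha X)[n-1], Z)$. Since $Z = \sum_i [-v_i, v_i]$ is a Minkowski sum of segments, multilinearity of mixed volume together with the classical segment identity $n\, V(A[n-1], [0,w]) = |w|\,\mu_{n-1}(\proj_{w^\perp}(A))$ yields
\begin{equation*}
b(\alpha X) = 2\alpha^{n-1}\sum_{i=1}^k |v_i|\, \mu_{n-1}\bigl(\proj_{v_i^\perp}(X)\bigr).
\end{equation*}

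To match this against $|\partial_e(\Z^n\cap\alpha X)|$, I would decompose by edge direction: $|\partial_e(\Z^n\cap\alpha X)| = \sum_{i=1}^k N_i(\alpha)$, where $N_i(\alpha)$ counts edges in direction $v_i$ with exactly one endpoint in $\alpha X$. The lattice lines parallel to $v_i$ are parametrized by the projection lattice $\Lambda_i := \proj_{v_i^\perp}(\Z^n)$, which has covolume $1/|v_i|$ because $v_i$ is primitive. Convexity of $\alpha X$ forces each such line $L$ to meet $\alpha X$ in a (possibly empty) interval, and a short case analysis shows that $L$ contributes exactly $2$ to $N_i(\alpha)$ if this interval contains at least one lattice point of $L$ and $0$ otherwise. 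Hence $N_i(\alpha) = 2\,|\Lambda_i \cap \alpha\proj_{v_i^\perp}(X)| - 2E_i(\alpha)$, where $E_i(\alpha)$ counts lines that meet $\alpha X$ but contain no lattice point of $\alpha X$. The lattice point counting estimate in $v_i^\perp$ yields $|\Lambda_i \cap \alpha\proj_{v_i^\perp}(X)| = |v_i|\alpha^{n-1}\mu_{n-1}(\proj_{v_i^\perp}(X)) + O(\alpha^{n-2})$; doubling and summing over $i$ reproduces $b(\alpha X)$ up to lower-order terms.

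The main obstacle is controlling the error $E_i(\alpha)$. A line counted by $E_i(\alpha)$ must meet $\alpha X$ in a segment of $v_i$-length less than $|v_i|$, which forces its projection to lie in the thin sub-region of $\proj_{v_i^\perp}(X)$ where the $v_i$-width of $X$ is at most $|v_i|/\alpha$. This region shrinks to the relative boundary of $\proj_{v_i^\perp}(X)$ as $\alpha\to\infty$, so its $(n-1)$-dimensional measure tends to $0$ and $E_i(\alpha) = o(\alpha^{n-1})$. Since $b(\alpha X) = \Theta(\alpha^{n-1})$, this error is negligible, and $|\partial_e(\Z^n\cap\alpha X)|/b(\alpha X) \to 1$, completing the proof.
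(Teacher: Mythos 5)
Your proposal follows essentially the same route as the paper: both reduce the edge boundary of $\Z^n\cap\alpha X$ to a sum over edge directions of counts of lattice lines parallel to $v_i$ (each line meeting the set contributing exactly $2$, which is the paper's gap formula from Theorem \ref{GapThm} specialized to convex sets), identify $b(\alpha X)=D_Z(\alpha X)$ with $2\sum_i\|v_i\|\,\mu_{n-1}\bigl(P_{v_i}(\alpha X)\bigr)$ via mixed volumes and the covolume $1/\|v_i\|$ of the projected lattice (Lemmas \ref{DerivativeLemma} and \ref{GeneralDeterminantLemma}), and then invoke standard lattice-point asymptotics. The only substantive difference is that you explicitly isolate and control the error term $E_i(\alpha)$ counting lines that meet $\alpha X$ but contain no lattice point of it --- a discrepancy between $|\Lambda_i\cap P_{v_i}(\alpha X)|$ and $|P_{v_i}(\Z^n\cap\alpha X)|$ that the paper's argument passes over silently --- so your write-up is, if anything, the more careful of the two.
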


The above Lemma tells us that solving the isoperimetric inequality on $\R^n$ using the boundary function $b$ should give us an idea of the shape of set that solves the edge isoperimetric inequality for a PL graph.  (We discuss conditions under which $X$ is guaranteed to be the optimal shape in Remark \ref{LimitingRemark}).  The main Theorem of this paper is that we can solve the corresponding continuous isoperimetric inequality on $\R^n$ using boundary function $b$:

\begin{Theorem}\label{DiscToContTheorem}
Suppose $G = (\Z^n, E)$ is a PL graph with edge segments $\ell_1, \ell_2, \dots, \ell_k$.  Let $Z$ be the zonotope
\begin{equation*}
Z = \sum_{i=1}^k \left(-\ell_i + \ell_i\right)
\end{equation*}
where all sums are the Minkowski sum.   Let $X$ be a scaling of $Z$ and $b$ be the boundary function as defined in Lemma \ref{ImportantLemma}.  Then for any $A \subset \R^n$ with $\text{Vol}(A) = \text{Vol}(X)$, we have
\begin{equation*}
b(X) \leq b(A)
\end{equation*}
with equality if and only if $A$ is homothetic to $X$.
\end{Theorem}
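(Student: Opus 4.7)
The plan is to reduce the statement to the Brunn--Minkowski inequality (equivalently, Minkowski's first inequality on mixed volumes), which is precisely the tool the authors advertise. The functional $b$ is a first-order expansion of $\mu_n(A + \epsilon Z)$ in $\epsilon$, so an isoperimetric inequality of the form $b(X) \le b(A)$ whenever $\mu_n(A) = \mu_n(X)$, with equality exactly when $A$ is homothetic to the fixed body $Z$, is precisely the classical Minkowski inequality specialized to the ``perimeter-defining'' body $Z$.

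First I would apply Brunn--Minkowski directly to $A$ and $\epsilon Z$:
\begin{equation*}
\mu_n(A + \epsilon Z)^{1/n} \ge \mu_n(A)^{1/n} + \epsilon\, \mu_n(Z)^{1/n}.
\end{equation*}
Raising to the $n$-th power and expanding in $\epsilon$ gives
\begin{equation*}
\mu_n(A + \epsilon Z) \ge \mu_n(A) + n\epsilon\, \mu_n(A)^{(n-1)/n} \mu_n(Z)^{1/n} + O(\epsilon^2),
\end{equation*}
so after dividing by $\epsilon$ and letting $\epsilon \to 0^+$, one obtains the sharp lower bound
\begin{equation*}
b(A) \ge n\, \mu_n(A)^{(n-1)/n} \mu_n(Z)^{1/n}.
\end{equation*}
Next, since $X = \lambda Z$ for some $\lambda > 0$, one has $X + \epsilon Z = (\lambda + \epsilon) Z$, hence $\mu_n(X + \epsilon Z) = (\lambda + \epsilon)^n \mu_n(Z)$ and $b(X) = n\lambda^{n-1}\mu_n(Z)$. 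A direct check shows $b(X) = n\,\mu_n(X)^{(n-1)/n}\mu_n(Z)^{1/n}$, so the Brunn--Minkowski lower bound is attained at $X$. Combined with the volume constraint $\mu_n(A) = \mu_n(X)$, this immediately yields $b(X) \le b(A)$.

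For the equality case one must invoke the equality clause of Brunn--Minkowski: equality $\mu_n(A + \epsilon Z)^{1/n} = \mu_n(A)^{1/n} + \epsilon\mu_n(Z)^{1/n}$ for all small $\epsilon > 0$ forces $A$ to be a homothet of $Z$ (up to a set of measure zero, and assuming nondegeneracy which the hypothesis $\mu_n(A) = \mu_n(X) > 0$ together with $Z$ being a full-dimensional zonotope guarantees). I expect this step to be the main obstacle, since one must argue that equality in the \emph{limit} $b(A) = b(X)$ already suffices to force equality in Brunn--Minkowski for all sufficiently small $\epsilon$ --- perhaps most cleanly by passing to the convex hull, using the Minkowski inequality $V(A[n-1], Z)^n \ge V(A)^{n-1} V(Z)$ with its known equality characterization (homothety of convex bodies), and noting that convexification can only decrease $b$ while preserving $\mu_n$ only when $A$ was already convex. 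Once $A$ is known to be homothetic to $Z$, it is homothetic to $X$ since $X$ and $Z$ are themselves homothetic, completing the proof.
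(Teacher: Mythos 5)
Your proposal is correct and follows essentially the same route as the paper: apply Brunn--Minkowski to $A+\epsilon Z$, divide by $\epsilon$ to get $b(A)\ge n\,\mu_n(A)^{(n-1)/n}\mu_n(Z)^{1/n}$, and check that a scaling of $Z$ attains this bound. Your remark that the equality case really requires Minkowski's first inequality $V(A[n-1],Z)^n \ge \mu_n(A)^{n-1}\mu_n(Z)$ with its homothety characterization (rather than equality in Brunn--Minkowski ``in the limit'') is in fact more careful than the paper, which simply asserts the equality clause after the limiting step.
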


The paper is organized as follows: in Section \ref{ContinuousSection} we prove Lemma \ref{ImportantLemma} and Theorem \ref{DiscToContTheorem}.  In section \ref{KnownSection} we apply Theorem \ref{DiscToContTheorem} to cases where the discrete isoperimetric inequality has been solved, to show how easily the proper ``shape'' of the discrete solution can be found.  And in section \ref{UnknownSection} we apply Theorem \ref{DiscToContTheorem} to some cases where the discrete isoperimetric problem has \emph{not} previously been solved.

\section{Defining and Solving the Continuous Isoperimetric Problem}\label{ContinuousSection}

%In order to translate the edge-isoperimetric problem for a PL graph to a continuous isoperimetric problem on $\R^n$, we need to first have some association between a set $S \subset \Z^n$ of minimum boundary and another set $S' \subset \R^n$.  Once this association is in place, we must have an appropriate way of translating the volume of $S$ into a volume of $S' $ such that when the volume of $S$ is large, it is nearly the volume of $S'$.  Similarly, we need an appropriate way of translating the edge boundary $|\partial_e(S)|$ into some other boundary $b(S')$ such that when the volume of $S$ is large, $|\partial_e(S)|$ is nearly $b(S')$. Then the limiting optimal shape in the discrete isoperimetric problem should be the same as the limiting optimal shape in the corresponding continuous isoperimetric problem.  In order to find this association, we must make a reasonable assumption on the sets $S \subset \Z^n$ of minimal boundary.

\subsection{Limiting Solutions}\label{AssumptionsSubSection}

We expect our technique will find the shape of sets with minimum edge boundary in PL graphs which have a \emph{limiting solution}:

\begin{Definition}
Suppose that $G = (V,E)$ is a PL graph.  We say that the edge isoperimetric problem for $G$ has a \emph{limiting solution} with convex body $K$ if for each $m \in \N$, there exists a set of minimum edge boundary $S_m\subset \Z^n$ of size $m$ such that the following holds: 

There exists a subsequence $S_{k_n}$ and a function $f:\Z \to \R$ with $\lim_{n \to \infty} f(k_n) = \infty$ such that 
\begin{equation*}
S_{k_n} = \Z^n \cap  f(k_n)K
\end{equation*}
where $f(k_n)K$ is the scaling of the set $K$ by the number $f(k_n)$.
\end{Definition}

%The assumption that we make has to do with the existence of sets of minimal boundary taking on a ``reasonable'' shape.  Specifically, we assume that for each $m \in \N$, there exists a set $S_m \subset \Z^n$ of volume $m$ of minimum edge boundary satisfying the following:
%
%There exists a sequence $k_n$, a finite list of convex bodies $K_1, K_2, \dots, K_\ell$, and a function $f:\Z \to \R^n$ with $\lim_{n \to \infty} f(k_n) = \infty$ such that 
%\begin{equation*}
%S_{k_n} = \Z^n \cap  f(k_n)K_{k_n} \quad \quad\text{ and } \lim_{n \to \infty} \frac{f(k_n)}{\text{Vol}(f( \text{ where } K_{k_n} \in \{K_1, K_2, \dots, K_\ell\} \text{ for each } n}
%\end{equation*}
%where $f(k_n)K_{k_n}$ is the scaling of the set $K_{k_n}$ by the number $f(k_n)$.  In words:  For any $N \in \N$, we can find a set of minimum boundary with volume larger than $N$ such that the set is a large scaling of some ``predictable'' and ``nice'' shape.
 In  words, a PL graph has a limiting solution if for any $N \in \N$, we can find a set of minimum boundary with volume larger than $N$ such that the set consists of the integer points in a scaling of a fixed convex body.

It is reasonable to expect a PL graph to have a limiting solution because a PL graph is so symmetric.  It is natural to expect shapes for sets of minimum boundary to be nearly the points in a convex set, and precisely the points in a convex set for particular volumes.  And again, given the symmetry, it is reasonable to expect that the same optimal shape appears repeatedly as the volume grows.  We also note that in Section \ref{KnownSection}, we apply our technique to all PL graphs we could find in the literature for which the edge-isoperimetric inequality has already been solved.  All of them do indeed satisfy this assumption.

Note that this assumption implies that
\begin{equation*}
\lim_{n \to \infty}\frac{\text{Vol}\left(f(k_n)K\right)}{|S_{k_n}|} = 1
\end{equation*}
Indeed, from Lattice theory, we know that as a convex set is scaled by an unbounded factor, the number of integer points in the scaled set approaches the volume of the scaled set.  (One can prove this by, for example, modifying the proof of Theorem 2.3 in Chapter VII Section 2 of \cite{MR1940576} and also using Minkowski's theorem on mixed volumes, which is stated as Theorem \ref{MixedVolumeThm} below).  

We also note that this assumption implies that all of the sets $S_{k_n}$  have no ``gaps'':

\begin{Definition}
Let $(\Z^n,E)$ be an PL graph with edges corresponding to vectors $v_1, v_2, \dots, v_k$.    For $S \subset \Z^n$, we define 
\begin{equation*}
\text{gap}_{v_i}(S) = \{x \in \Z^n: x-v_i \in S, x \not\in S, \text{ and } x+bv_i \in S \text{ for some }b \geq 1\}
\end{equation*}
Thus, one can think of a point $x \in \text{gap}_{v_i}(S)$ as the first vertex in $\Z^n$ which indicates a gap in $S$ in the line through $x$ in the direction of $v_i$.

We say that $S \subset \Z^n$ has \emph{no gaps} if for each $i=1, 2, \dots, k$ the set $\text{gap}_{v_i}(S)$ is empty.
\end{Definition}

%
%Our assumption leads us directly to the association mentioned at the beginning of this section.  Specifically, to each set $S_{k_n}$ of minimum boundary, we associate to it the set $S' = f(k_n)K_{k_n}$.  Thus we already have that $|S_{k_n}|$ approximates the volume of $S'$.  At this point, it remains to show that we can translate the edge boundary  $|\partial_e(S)|$ into some boundary $b(S')$ so that these values are nearly the same when $|S|$ is large.

\subsection{Appropriate Boundary Definition for Continuous Problem}\label{BoundarySubSection}

It is not too hard to calculate the edge boundary for a general set $S \subset \Z^n$ in a PL graph.  First we require a definition.

\begin{Definition}

Let $(\Z^n, E)$ be a PL graph with edges corresponding to vectors $v_1, v_2, \dots, v_k$.  We define $P_{v_i}(S)$ to be the projection of $S$ onto the hyperplane of $\R^n$ which is perpendicular to $v_i$.  That is,
\begin{equation*}
P_{v_i}(S) = \left\{u-\frac{\left<u,v_i\right>}{||v_i||_2}v_i: u \in S\right\}
\end{equation*}

\end{Definition}

We can now calculate the edge boundary of $S \subset \Z^n$:

\begin{Theorem}\label{GapThm}
Let $(\Z^n,E)$ be a PL graph with edges corresponding to vectors $v_1, v_2, \dots, v_k$.  Let $S \subset \Z^n$ be a finite set.  Then
\begin{equation}\label{EqnWithGap}
|\partial_e (S)| = 2\sum_{i=1}^k\left( |P_{v_i}(S)| +|\text{gap}_{v_i}(S)|\right)
\end{equation}
\end{Theorem}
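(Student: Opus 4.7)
The plan is to decompose $\partial_e(S)$ by edge direction and then by lattice line within each direction. Since $v_i \neq -v_j$ for $i \neq j$, every edge of $G$ is uniquely of the form $\{u, u+v_i\}$ for some $u \in \Z^n$ and some $i \in \{1, \ldots, k\}$. For a fixed $i$, the cosets of the subgroup $\Z v_i \subset \Z^n$ partition $\Z^n$ into disjoint lattice lines $L_\alpha$ in direction $v_i$, and each edge in direction $v_i$ lies on exactly one such line. Write $N_i$ for the number of lines $L_\alpha$ with $S \cap L_\alpha \neq \emptyset$. The theorem then reduces to (a) counting boundary edges line-by-line, and (b) identifying $N_i$ with $|P_{v_i}(S)|$.

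For (a), parametrize a non-empty line $L_\alpha = \{u_\alpha + t v_i : t \in \Z\}$ by $\Z$, and decompose $S \cap L_\alpha$ into $B_\alpha$ maximal blocks of consecutive integers. Each block contributes exactly two boundary edges on $L_\alpha$ (one at each end), for a total of $2 B_\alpha$. I then claim $B_\alpha - 1 = |L_\alpha \cap \text{gap}_{v_i}(S)|$: the defining conditions $x - v_i \in S$, $x \notin S$, and $x + b v_i \in S$ for some $b \geq 1$ are met exactly by the leftmost missing integer immediately to the right of each non-terminal block. Summing over all lines and over $i$ produces
$$
|\partial_e(S)| = \sum_{i=1}^k 2\bigl(N_i + |\text{gap}_{v_i}(S)|\bigr).
$$

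For (b), two points $u, u' \in \Z^n$ lie on the same lattice line iff $u - u' \in \Z v_i$, while they collapse to the same element of $P_{v_i}(S)$ iff $u - u' \in \R v_i$. The primitivity of $v_i$ forces $\R v_i \cap \Z^n = \Z v_i$: choose integers $a_1, \ldots, a_n$ with $\sum_j a_j v_{i,j} = 1$, and then any $\lambda \in \R$ with $\lambda v_i \in \Z^n$ satisfies $\lambda = \sum_j a_j (\lambda v_{i,j}) \in \Z$. Hence the two equivalence relations on $S$ agree, so $N_i = |P_{v_i}(S)|$, which finishes the proof.

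The main technical care is in verifying the bijection between inter-block positions and elements of $\text{gap}_{v_i}(S)$: one must rule out positions to the left of the first block (their predecessor is not in $S$) and the position immediately after the last block (it has no successor in $S$ further along the ray), and check that no interior missing position $x$ qualifies (since its predecessor $x - v_i$ would also be missing). The use of primitivity to match lattice lines with projection classes is the only other nontrivial step; without it, $P_{v_i}(S)$ could under-count lines.
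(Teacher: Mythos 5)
Your proof is correct, but it takes a genuinely different route from the paper's. The paper proves the identity by induction on $|S|$: it removes a point $u$, applies the inductive hypothesis, and then runs a three-case analysis (both, one, or neither of $u\pm v_i$ in $S$) showing that the two sides of the equation change by the same amount when $u$ is restored. You instead give a direct, non-inductive count: decompose the edge set by direction, decompose each direction into lattice lines (cosets of $\Z v_i$), observe that on each occupied line the boundary edges come in pairs at the ends of the $B_\alpha$ maximal blocks, and match the $B_\alpha-1$ inter-block positions bijectively with the gap elements on that line, giving $2\sum_\alpha B_\alpha = 2(N_i + |\text{gap}_{v_i}(S)|)$ per direction. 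Your approach has two advantages: it explains \emph{why} the formula holds (projections count occupied lines, gaps count the excess blocks beyond one per line), and it makes explicit the role of primitivity via $\R v_i \cap \Z^n = \Z v_i$, which is needed to identify fibers of $P_{v_i}$ over $S$ with lattice lines. The paper uses this same fact only implicitly, in the step of Case 3 where it asserts $|P_{v_i}(S)| = |P_{v_i}(S\setminus\{u\})|+1$ when no $u+bv_i$ lies in $S$; your treatment closes that small gap. The paper's induction is shorter to write but hides the line-by-line structure. One cosmetic point: your block decomposition tacitly uses that $S$ is finite so each line meets $S$ in finitely many points and the blocks have well-defined endpoints; this is given in the hypothesis and worth a passing mention.
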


\begin{proof}
We proceed by induction on $|S|$.  If $|S| =1$, then $\text{gap}_{v_i}(S) = \emptyset$ for each $i=1, 2, \dots, k$.  We can also see that if $S = \{u\}$, then
\begin{equation*}
\partial_{e}( S) = \{(u, u+v_i): i=1, 2, \dots, k\} \cup \{(u,u-v_i), i=1, 2, \dots, k\}
\end{equation*}
Additionally, in this case, 
\begin{equation*}
|P_{v_i}(S)| = 1
\end{equation*}
for each $i=1, 2, \dots, k$.  Thus, we have 
\begin{equation*}
|\partial_e (S)| = 2\sum_{i=1}^k\left( |P_{v_i}(S)| +|\text{gap}_{v_i}(S)|\right)
\end{equation*}
if $|S|=1$.

Now suppose that $|S|>1$.  Fix $u \in S$.  By induction,
\begin{equation*}
|\partial_e (S\backslash\{u\})| = 2\sum_{i=1}^k\left( |P_{v_i}(S\backslash\{u\})| +  |\text{gap}_{v_i}(S\backslash\{u\})|\right)
\end{equation*}
Consider what $u$ contributes to the edge boundary of $S$.  Note that for  each $v_i, i=1, 2, \dots, k$ we have three cases:

\noindent \underline{Case 1: Both $u+v_i$ and $u-v_i$ are in $S$}  In this case,
\begin{align*}
(u, u+v_i)  \in \partial_e(S\backslash \{u\}) & \quad\quad \quad   (u-v_i, u)  \in \partial_e(S\backslash \{u\})  \\
(u, u+v_i)  \not\in \partial_e(S) &  \quad\quad \quad   (u-v_i, u)  \not\in \partial_e(S)
\end{align*}
and
\begin{equation*}
\text{gap}_{v_i}(S) =  \text{gap}_{v_i}(S\backslash\{u\})\backslash\{u\} \quad \quad \quad P_{v_i}\left(S\backslash\{u\}\right) = P_{v_i}(S)
\end{equation*}
Thus we can see that when considering only edges in the $v_i$ direction, both the left and right hand sides of equation \eqref{EqnWithGap} go down by 2  when $u$ is added back to $S$.

\noindent \underline{Case 2: Exactly one of  $u+v_i$ or $u-v_i$ is in $S$}  Without loss of generality, assume that $u-v_i\in S$.  Then
\begin{align*}
(u-v_i\, u)  \in \partial_e(S\backslash \{u\}) & \quad\quad \quad   (u,u+v_i)  \not\in \partial_e(S\backslash \{u\})  \\
(u-v_i\, u)  \not\in \partial_e(S) &  \quad\quad \quad   (u,u+v_i)  \in \partial_e(S)
\end{align*}
and
\begin{equation*}
\left|\text{gap}_{v_i}(S)\right| =  \left|\text{gap}_{v_i}(S\backslash\{u\})\right| \quad \quad \quad  P_{v_i}\left(S\backslash\{u\}\right) = P_{v_i}(S)
\end{equation*}

Thus we can see that when considering only edges in the $v_i$ direction, both the left and right hand sides of equation \eqref{EqnWithGap} do not change  when $u$ is added back to $S$.

\noindent \underline{Case 3: Neither $u+\epsilon$ nor $u-\epsilon$ are in $S$}

In this case, 
\begin{align*}
(u, u+v_i)  \not\in \partial_e(S\backslash \{u\}) & \quad\quad \quad   (u-v_i, u)  \not\in \partial_e(S\backslash \{u\})  \\
(u, u+v_i)  \in \partial_e(S) &  \quad\quad \quad   (u-v_i, u)  \in \partial_e(S)
\end{align*}
and  \emph{either} $u+b v_i \in S$ for some $b$ in which case
\begin{equation*}
\left|\text{gap}_{v_i}(S)\right| =  \left|\text{gap}_{v_i}(S\backslash\{u\})\right|+1 \quad \quad \quad P_{v_i}\left(S\backslash\{u\}\right) = P_{v_i}(S)
\end{equation*}
\emph{or}  $u+b v_i \not\in S$ for any $b$ in which case
\begin{equation*}
\text{gap}_{v_i}(S)=  \text{gap}_{v_i}(S\backslash\{u\})= \emptyset \quad \quad \quad \left|P_{v_i}(S)\right| = \left|P_{v_i}\left(S \backslash\{u\}\right)\right| +1
\end{equation*}

Thus we can see that when considering only edges in the $v_i$ direction, both the left and right hand sides of equation \eqref{EqnWithGap} go up by 2  when $u$ is added back to $S$.

Since $i$ was arbitrary, we can see that all of the changes between $\partial_e(S \backslash\{u\})$ and $\partial_e(S)$ are balanced out by changes in either the corresponding gaps or projections.  Thus, we have
\begin{equation*}
|\partial_e (S)| = 2\sum_{i=1}^k\left( |P_{v_i}(S)| +|\text{gap}_{v_i}(S)|\right)
\end{equation*}

\end{proof}

Theorem \ref{GapThm} clearly has the following corollary:

\begin{Corollary}\label{EdgeBoundaryCorollary}
Let $(\Z^n,E)$ be a PL graph with edges corresponding to vectors $v_1, v_2, \dots, v_k$.   Let $S \subset \Z^n$ be a finite set such that $\text{gap}_{v_i}(S) = \emptyset$ for each $i=1, 2, \dots, k$.  Then
\begin{equation}\label{eqn1}
|\partial_e (S)| = 2\sum_{i=1}^k|P_{v_i}(S)| 
\end{equation}

\end{Corollary}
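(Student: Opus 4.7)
The plan is to simply invoke Theorem \ref{GapThm} with the additional hypothesis on gap sets. By Theorem \ref{GapThm}, for any finite $S \subset \Z^n$ we have
\begin{equation*}
|\partial_e(S)| = 2\sum_{i=1}^k \left(|P_{v_i}(S)| + |\text{gap}_{v_i}(S)|\right).
\end{equation*}
The hypothesis that $\text{gap}_{v_i}(S) = \emptyset$ for each $i=1, 2, \dots, k$ forces $|\text{gap}_{v_i}(S)| = 0$ for every such $i$, so each gap contribution in the sum vanishes term by term.

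After discarding those vanishing contributions, the identity collapses directly to
\begin{equation*}
|\partial_e(S)| = 2\sum_{i=1}^k |P_{v_i}(S)|,
\end{equation*}
which is precisely the claim. Since all of the substantive combinatorial work, namely the inductive case analysis tracking how adding a single vertex to $S$ affects edge boundaries, projections, and gaps, has already been carried out in the proof of Theorem \ref{GapThm}, there is no real obstacle remaining; the corollary is merely a restatement of that theorem specialized to the gap-free regime. I would therefore present the argument as a two-line deduction, with the only remark being that the gap-free hypothesis is automatically satisfied in the setting relevant to the rest of the paper, namely when $S = \Z^n \cap \alpha X$ for $X$ convex, which is why this specialization is the one actually used later.
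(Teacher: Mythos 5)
Your proof is correct and matches the paper's own treatment: the paper derives Corollary \ref{EdgeBoundaryCorollary} immediately from Theorem \ref{GapThm} by dropping the vanishing gap terms, exactly as you do.
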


\begin{Remark}
We note that Corollary \ref{EdgeBoundaryCorollary} is a nice counterpoint to the vertex boundary calculations in \cite{MR2946103}, which calculate the vertex boundary of optimal sets in a particular PL graph as a weighted sum of projections of the graph.  
\end{Remark}

Recall that from our arguments in Subsection \ref{AssumptionsSubSection}, for a PL graph with a limiting solution, once the volume is large enough, the sets of minimum boundary have no gaps.  Thus, we can assume that for volume large enough, the boundary of a set of minimum boundary can be calculated using equation \eqref{eqn1}.  In order to finish our translation of the discrete isoperimetric problem into a continuous isoperimetric problem, we must now define an appropriate boundary function $b$ on $\R^n$.  Let $\mu_n$ denote the usual Lebesgue measure on $\R^n$.

\begin{Definition}
Let $u, P \subset \R^n$. Define
\begin{equation}\label{deriv}
D_u P=\lim_{\epsilon \rightarrow 0^{+}} \frac{\mu_n(P+\epsilon u)-\mu_n(P)}{\epsilon}.
\end{equation}
\end{Definition}

Note that the special case of $u = B_n$, the Euclidean ball in $\R^n$ of radius 1, gives the surface volume of the set $P$.  

We now need a couple of Lemmas, which we note also appear in \cite{TsukermanVeomettCauchy}.  We include them here for completeness.  We will use Minkowski's theorem on mixed volumes, which can be found, for example, in  Chapter 5 of Schneider's text \cite{MR3155183}.  This theorem says that the volume of a Minkowski sum of convex bodies can be written as a polynomial in the coefficients of that Minkowski sum, where the coefficients of the polynomial depend only on the convex bodies.  Specifically:  

\begin{Theorem}\label{MixedVolumeThm}
Suppose $K_1, K_2, \dots, K_m$ are convex bodies in $\R^n$.  Then
\begin{equation*}
\mu_n\left(\lambda_1K_1+\lambda_2 K_2+ \cdots + \lambda_m K_m\right) = \sum \lambda_{i_1}\lambda_{i_2} \cdots \lambda_{i_n} V\left(K_{i_1}, K_{i_2}, \dots, K_{i_n}\right)
\end{equation*}
where the sum on the left hand side is the Minkowski sum, and the sum on the right hand side is over all multisets of size $n$ whose elements are in the set $\{1, 2, \dots, m\}$.  The  functions $V$ are nonnegative, symmetric, and depend only on the convex bodies $K_{i_1}, K_{i_2}, \dots, K_{i_n}$.  For a fixed $n$-dimensional convex body $K$, $V(\underbrace{K, K, \dots, K}_{n \text { times}}) = \mu_n(K)$.
\end{Theorem}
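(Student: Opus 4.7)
The plan is to first establish the polynomial expansion for polytopes via induction on the dimension $n$, and then extend to general convex bodies by Hausdorff approximation. The coefficients of the resulting polynomial will define the mixed volumes $V$, and their stated properties follow from the construction.

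For polytopes, induction on $n$ is natural. The base case $n=1$ is trivial: intervals combine as $\mu_1\bigl(\sum_i \lambda_i [a_i, b_i]\bigr) = \sum_i \lambda_i (b_i - a_i)$. For the inductive step, I would invoke the facet decomposition $\mu_n(P) = \frac{1}{n}\sum_{u} h_P(u)\,\mu_{n-1}(F_P(u))$, where the sum runs over outer unit normals $u$ to the facets $F_P(u)$ of $P$ and $h_P$ is the support function. For a Minkowski combination $P(\lambda) = \sum_i \lambda_i P_i$, the support function is linear in $\lambda$, namely $h_{P(\lambda)} = \sum_i \lambda_i h_{P_i}$, and the face in any direction $u$ decomposes as $F_{P(\lambda)}(u) = \sum_i \lambda_i F_{P_i}(u)$, lying in the $(n-1)$-dimensional hyperplane perpendicular to $u$. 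By the inductive hypothesis applied in that hyperplane, $\mu_{n-1}(F_{P(\lambda)}(u))$ is a polynomial of degree $n-1$ in $\lambda$, so multiplying by the linear factor $h_{P(\lambda)}(u)$ and summing over $u$ yields a polynomial of degree $n$. The technical care lies in the fact that the set of actual outer facet normals of $P(\lambda)$ can change as $\lambda$ varies; one handles this by fixing once and for all the finite collection of unit vectors that can arise as outer facet normals of any $P_i$, letting degenerate contributions collapse to zero $(n-1)$-volume, and verifying that the resulting formula agrees on each chamber of the positive $\lambda$-orthant, hence extends to a single polynomial.

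To reach arbitrary convex bodies, I would approximate each $K_i$ in the Hausdorff metric by polytopes $P_i^{(j)}$. Both Minkowski addition and Lebesgue measure on convex bodies are Hausdorff-continuous, so $\mu_n\bigl(\sum_i \lambda_i P_i^{(j)}\bigr) \to \mu_n\bigl(\sum_i \lambda_i K_i\bigr)$ as $j \to \infty$. Each pre-limit is a polynomial of degree $n$ in $\lambda$, and pointwise convergence of polynomials of bounded degree at sufficiently many points forces coefficientwise convergence, so the limit is itself a polynomial of degree $n$ whose coefficients depend only on the $K_i$; these we take as the mixed volumes $V(K_{i_1},\ldots,K_{i_n})$. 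Symmetry is immediate from the symmetry of the monomials $\lambda_{i_1}\cdots\lambda_{i_n}$. Nonnegativity can be obtained by first reducing (using the translation invariance of mixed volumes, which follows from the polynomial identity) to the case that each $K_i$ contains the origin, so that $h_{K_i}\geq 0$, and then observing that the polytope induction produces each coefficient as a sum of products of nonnegative quantities, a property preserved under the limit. Finally, $V(K,K,\ldots,K) = \mu_n(K)$ follows by comparing the leading coefficient of the polynomial identity with the homogeneity $\mu_n(\lambda K) = \lambda^n \mu_n(K)$. The main obstacle is the polytope step itself: making precise that the facet-based inductive formula yields a single global polynomial in $\lambda$, and in particular that chamber-boundary transitions between combinatorial types are absorbed by the vanishing of degenerate faces.
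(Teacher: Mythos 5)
First, a point of reference: the paper does not prove this statement. It is quoted as Minkowski's theorem on mixed volumes and attributed to Chapter 5 of Schneider's monograph, so there is no in-paper argument to compare against. Your outline is essentially the standard proof found in that cited source: polynomiality for polytopes by induction on dimension via the pyramid formula $\mu_n(P)=\tfrac{1}{n}\sum_u h_P(u)\,\mu_{n-1}(F_P(u))$, combined with the linearity of support functions and the face identity $F_{\sum_i\lambda_iP_i}(u)=\sum_i\lambda_iF_{P_i}(u)$, followed by Hausdorff approximation and coefficientwise convergence of polynomials of bounded degree. The extraction of symmetry, of nonnegativity (after translating each body to contain the origin so that $h_{K_i}\geq 0$), and of $V(K,\dots,K)=\mu_n(K)$ from homogeneity all proceed exactly as you describe. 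One cosmetic remark: the paper indexes the sum by multisets rather than by ordered $n$-tuples, so its $V$ absorbs the multinomial coefficients; this is a normalization choice that does not affect your argument.

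There is one concrete error in the step you yourself flag as the delicate one. The fixed finite set of directions over which you sum cannot be taken to be the union of the outer facet normals of the individual $P_i$. A Minkowski sum can acquire a facet in a direction $u$ in which no summand has a facet, because lower-dimensional faces $F_{P_i}(u)$ of different summands can jointly span an $(n-1)$-dimensional set: in $\R^3$, take $P_1$ a triangular prism with its apex edge on top running in the $x$-direction and $P_2$ a triangular prism with its apex edge on top running in the $y$-direction; neither has a horizontal facet, but $P_1+P_2$ has a square top facet with normal $e_3$. With your choice of normal set the facet formula would omit such facets and the identity would be false. The repair is standard: take the direction set to be the outer facet normals of the single polytope $P_1+P_2+\cdots+P_m$. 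For every $\lambda$ in the closed positive orthant the facet normals of $\sum_i\lambda_iP_i$ lie in this set, degenerate directions contribute zero $(n-1)$-volume as you note, and one obtains a single polynomial on the whole orthant with no chamber-by-chamber analysis. With that correction, your outline is the correct classical argument.
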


From Minkowski's Theorem on mixed volume it follows that if $P$ and $u$ are convex, then $D_u P$ is linear in $u$:

\begin{Lemma}\label{LinearityLemma}
Suppose $P, u, v \subset \R^n$ with $P, u$, and $v$ convex bodies, and suppose $\alpha, \beta \in \R$.  Then
\begin{equation*}
D_{\alpha u+\beta v}(P) = \alpha D_u(P)+\beta D_v(P)
\end{equation*}
\end{Lemma}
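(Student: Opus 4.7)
The plan is to identify $D_u P$ with a specific mixed volume via Theorem \ref{MixedVolumeThm} and then invoke the multilinearity of mixed volumes in their Minkowski arguments.

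First, I would apply Theorem \ref{MixedVolumeThm} to the two-body Minkowski combination $1 \cdot P + \epsilon \cdot u$. Collecting terms by the number $j$ of factors of $u$ appearing in each multiset (and using symmetry of the mixed volume function $V$), one obtains a polynomial in $\epsilon$ of degree $n$ whose constant term is exactly $\mu_n(P)$. Subtracting $\mu_n(P)$, dividing by $\epsilon$, and letting $\epsilon \to 0^+$ then isolates the coefficient of $\epsilon^1$, which is the mixed volume term containing one factor of $u$ and $n-1$ factors of $P$. Writing this term as $W(u, P)$, one concludes
\begin{equation*}
D_u P = W(u, P).
\end{equation*}

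Next, I would repeat this computation for $\mu_n\bigl(P + \epsilon(\alpha u + \beta v)\bigr)$. For $\alpha, \beta \geq 0$, this quantity equals $\mu_n(1 \cdot P + \epsilon\alpha \cdot u + \epsilon\beta \cdot v)$, so Theorem \ref{MixedVolumeThm} again yields a polynomial in $\epsilon$ of degree $n$ with constant term $\mu_n(P)$. The coefficient of $\epsilon^1$ arises only from those multisets containing $n-1$ copies of $P$ together with a single copy of either $u$ or $v$; by symmetry and the identification in the previous paragraph, this coefficient equals $\alpha W(u, P) + \beta W(v, P)$. Taking the limit $\epsilon \to 0^+$ and substituting $D_u P = W(u, P)$ and $D_v P = W(v, P)$ gives the desired identity $D_{\alpha u + \beta v} P = \alpha D_u P + \beta D_v P$.

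The only real obstacle is bookkeeping: one must correctly enumerate the multisets and track any combinatorial factor dictated by the normalization convention of Theorem \ref{MixedVolumeThm}, which is routine. Handling negative scalars $\alpha$ or $\beta$ requires more care, since Minkowski scalar multiplication by a negative number must be interpreted as scaling the reflected body; however, the paper's subsequent applications of Lemma \ref{LinearityLemma} appear to only invoke the nonnegative case, so this subtlety does not cause genuine analytical difficulty.
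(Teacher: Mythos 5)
Your proof is correct and follows essentially the same route as the paper's: both identify $D_u P$ with the mixed volume $V(P,\dots,P,u)$ by isolating the $\epsilon^1$ coefficient in Minkowski's polynomial expansion, and then read off linearity from the corresponding expansion of $\mu_n(P+\epsilon\alpha u+\epsilon\beta v)$. Your caveat about negative scalars is a fair observation that the paper's proof glosses over (its statement allows $\alpha,\beta\in\R$ while the mixed-volume expansion only directly applies to nonnegative coefficients), but as you note this does not affect the paper's subsequent use of the lemma.
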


\begin{proof}
By definition we have
\begin{equation*}
D_{\alpha u+\beta v}(P) = \lim_{\epsilon \to 0^+} \frac{\mu_n\left(P+\epsilon \left(\alpha u+\beta v\right)\right) - \mu_n(P)}{\epsilon} =  \lim_{\epsilon \to 0^+} \frac{\mu_n\left(P+\epsilon \alpha u+ \epsilon \beta v\right) - \mu_n(P)}{\epsilon} 
\end{equation*}
From Theorem \ref{MixedVolumeThm}, we can see that
\begin{equation*}
D_{\alpha u+\beta v}(P) = \alpha V(\underbrace{P, P, \dots, P}_{n-1 \text{ times}}, u) + \beta V(\underbrace{P, P, \dots, P}_{n-1 \text{ times}}, v) 
\end{equation*}
where $V$ represents the function in the statement of Theorem \ref{MixedVolumeThm}.  Similarly, one can easily see that
\begin{align*}
D_u(P) &=  V(\underbrace{P, P, \dots, P}_{n-1 \text{ times}}, u) \\
 D_v(P) &=  V(\underbrace{P, P, \dots, P}_{n-1 \text{ times}}, v) 
 \end{align*}
and our Lemma is proved.
\end{proof}

 For the following Lemma, we prove that the derivative we've defined in equation \eqref{deriv}  calculates the volume of the projection of a convex body in the case where $u$ is a segment of    length 1.  For a convex body $K \subset \R^n$ and 
$u \in \s^{n-1}$, we denote by $P_u(K)$ the projection of $K$ onto the $(n-1)$-dimensional subspace of $\R^n$ which is perpendicular to $u$.  

\begin{Lemma}\label{DerivativeLemma}
Let $u$ be a segment of length $1$ and $K \subset \R^n$ convex. Then
\begin{equation*}\label{projeq}
\mu_{n-1} \left(P_u(K)\right) = D_u K.
\end{equation*}
\end{Lemma}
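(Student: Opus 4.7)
The plan is to reduce to a straightforward application of Fubini's theorem in coordinates adapted to $u$. Without loss of generality I would assume, after rotating $\R^n$, that $u$ is the segment from the origin to the standard basis vector $e_n$; this is legitimate because both sides of the desired equation are rotation-invariant (the Lebesgue measure $\mu_n$, the projection volume $\mu_{n-1}(P_u(K))$, and the definition of $D_u K$ are all preserved by orthogonal change of variables). With this normalization, $P_u(K)$ is the usual projection onto the hyperplane $H = \{x_n = 0\} \cong \R^{n-1}$.

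Next I would describe $K$ and $K + \epsilon u$ by their vertical fibers. For $y \in \R^{n-1}$, set $K_y = \{t \in \R : (y,t) \in K\}$. Because $K$ is convex, each $K_y$ is either empty or a closed interval $[a(y), b(y)]$, and $P_u(K) = \{y : K_y \neq \emptyset\}$. The key observation is that the fiber of $K + \epsilon u$ at $y$ is exactly $[a(y), b(y) + \epsilon]$ whenever $K_y \neq \emptyset$, and empty otherwise. This follows directly from the definition of the Minkowski sum: $(y,t) \in K + \epsilon u$ iff there exist $(y,s) \in K$ and $\tau \in [0,\epsilon]$ with $t = s + \tau$, which by convexity of $K_y$ is equivalent to $t \in [a(y), b(y)+\epsilon]$.

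Now I would invoke Fubini:
\begin{equation*}
\mu_n(K + \epsilon u) = \int_{P_u(K)} \bigl(b(y) - a(y) + \epsilon\bigr)\, dy = \mu_n(K) + \epsilon\, \mu_{n-1}\bigl(P_u(K)\bigr).
\end{equation*}
Consequently the difference quotient in \eqref{deriv} is identically equal to $\mu_{n-1}(P_u(K))$ for every $\epsilon > 0$, and taking the limit gives $D_u K = \mu_{n-1}(P_u(K))$, as desired.

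There is no real obstacle here; the only step that requires a small amount of care is the fiber identification for $K + \epsilon u$, which crucially uses convexity of $K$ (so that the fibers $K_y$ are intervals, making the Minkowski sum in the $e_n$-direction behave as a mere elongation of each fiber by $\epsilon$). Once that is in hand, the conclusion is immediate from Fubini, and one does not even need to appeal to the mixed-volume machinery of Theorem \ref{MixedVolumeThm}, although an alternative proof would identify $D_u K$ with the mixed volume $n \cdot V(K,\ldots,K,u)$ and quote the classical fact that the mixed volume of a convex body with a unit segment equals the projection volume onto the perpendicular hyperplane.
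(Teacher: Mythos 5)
Your proof is correct and is essentially the same argument as the paper's: both decompose $\R^n$ into lines (fibers) parallel to $u$, use convexity to see that each nonempty fiber of $K+\epsilon u$ is the corresponding fiber of $K$ elongated by exactly $\epsilon$, and integrate over the projection via Fubini/Cavalieri to get $\mu_n(K+\epsilon u)-\mu_n(K)=\epsilon\,\mu_{n-1}(P_u(K))$. The only cosmetic difference is that you rotate coordinates so that $u=e_n$, whereas the paper works directly with the family of lines parallel to $u$ identified with $\R^{n-1}$.
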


\begin{proof}
Let $L_u$ be the set of lines parallel to $u$.  Note that each $l \in L_u$ corresponds uniquely to a single point in $l^\perp$, so that $L_u$ is isomorphic to $\R^{n-1}$ (and thus we can define the measure $\mu_{n-1}$ on $L_u$).  Then
\[
\mu_n(K)=\int_{l \in L_u} \mu_1(l \cap K) d \mu_{n-1}.
\]
Now,
\[
\mu_{n-1}\left(P_u(K)\right) =\int_{\substack{ l \cap K \neq \emptyset \\ l \in L_u }} 1 d \mu_{n-1}.
\]
For $\epsilon >0$, we have
\[
\mu_n\left(K+\epsilon u\right)-\mu_n\left(K\right)=\int_{l \in L_u}\left( \mu_1\left(l \cap (K+\epsilon u)\right)-\mu_1\left(l \cap K\right) \right)d\mu_{n-1}.
\]
Convexity implies that
\[
\mu_1\left(l \cap (K+\epsilon u)\right)-\mu_1\left(l \cap K\right) = \epsilon
\]
whenever $l$ intersects $K$. Therefore the last integral is equal to
\[
\int_{\substack{ l \cap K \neq \emptyset \\ l \in L_u }} \epsilon d \mu_{n-1}=\epsilon\mu_{n-1} \left(P_u(K)\right) 
\]
and hence,
\begin{equation*}
D_u K = \lim_{\epsilon \to 0^+}  \frac{\mu_n(K+\epsilon u)-\mu_n(K)}{\epsilon} = \lim_{\epsilon \to 0^+} \frac{\epsilon\mu_{n-1} \left(K|u^\perp\right) }{\epsilon} = \mu_{n-1} \left(P_u(K)\right) 
\end{equation*}
\end{proof}

We are now able to see how to define the boundary function $b$ for the continuous isoperimetric problem.  Recall that for a PL graph $(\Z^n, E)$ with edges corresponding to vectors $v_1, v_2, \dots, v_k$, the boundary of a set $S \subset \Z^n$ having no gaps is twice the sum of the number of points in $P_{v_i}$ for $i=1, 2, \dots, k$:  

\begin{equation*}
|\partial_e (S)| = 2\sum_{i=1}^k|P_{v_i}(S)| 
\end{equation*}

Since $P_{v_i}(S) = P_{-v_i}(S)$, we can re-write this edge calculation as: 

\begin{equation*}
|\partial_e (S)| = \sum_{i=1}^k\left(|P_{v_i}(S)| +|P_{-v_i}(S)| \right)
\end{equation*}

If the edge isoperimetric inequality for the PL graph has a limiting solution,  for the sets $S_{k_n} \subset \Z^n$ of minimum boundary, the integer points in the set $f(k_n)K$ are precisely $S_{k_n}$.  Thus, again from Lattice Theory, $c_i|P_{v_i}(S)|$ is a good approximation for $\mu_{n-1}(P_{v_i}(f(k_n)K))$, where $c_i$ is the determinant of the lattice $\Lambda = P_{v_i}(\Z^n)$.  Thus, using what we found in Lemma \ref{DerivativeLemma}, we will define our boundary function $b$ on $\R^n$ as follows:  for $A \subset \R^n$.
\begin{equation}\label{EqnWithConstant}
b(A) = \sum_{i=1}^k c_i(D_{u_i}(A) + D_{-u_i}(A))
\end{equation}
where $c_i$ is the determinant of the lattice $P_{v_i}(\Z^n)$ and $u_i $ is the segment of length 1 in the direction of $v_i$.  Thus, for the optimal sets $S_{k_n}$, we will have both 
\begin{align*}
\lim_{n \to \infty}\frac{\text{Vol}\left(f(k_n)K\right)}{|S_{k_n}|} &= 1 \quad \quad \quad \text{ and } \\
\lim_{n \to \infty} \frac{b(f(k_n)K)}{|\partial_e(S_{k_n})| } &= 1
\end{align*}

Note that by the same argument, for \emph{any} convex set $X$, we have for $\alpha \in \R$ 
\begin{align}\label{GeneralBoundaryEquations}
\lim_{\alpha \to \infty}\frac{ \mu_n(\alpha X)}{|\Z^n \cap \alpha X|}  &= 1 \quad \quad \quad \text{ and }  \nonumber \\
\lim_{\alpha \to \infty}\frac{b(\alpha X )}{|\partial_e(\Z^n \cap \alpha X)|}  &= 1
\end{align}

We have nearly finished the proof of Lemma \ref{ImportantLemma};
we need only simplify the expression in equation \eqref{EqnWithConstant}.  Specifically,  we need to calculate the constants $c_i$.  In other words: what is the determinant of the lattice $P_{v_i}(\Z^n)$?

In order to answer this, we need the following Lemmas:

\begin{Lemma}\label{ProjectionLatticeLemma}
Suppose $a \in \Z^n$ and $\Lambda = P_a\left(\Z^n\right)$.  Then 
\begin{equation*}
\Lambda^* = \{x \in \Z^n: \left<x,a \right> = 0\}
\end{equation*}
where $\Lambda^*$ denotes the dual lattice to $\Lambda$.
\end{Lemma}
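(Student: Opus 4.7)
The plan is to prove the set equality by establishing the two inclusions. Write $M = \{x \in \Z^n : \left<x, a\right> = 0\}$ for the right-hand side. First I would observe that $\Lambda = P_a(\Z^n)$ is a full-rank lattice inside the hyperplane $a^\perp$ (its generators include the projections of the standard basis vectors, whose span is all of $a^\perp$), so the dual $\Lambda^*$ is naturally a lattice sitting in $a^\perp$ as well.

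The key identity driving both inclusions is that $P_a$ acts as the identity when paired against a vector in $a^\perp$: for any $y \in a^\perp$ and any $u \in \R^n$,
\begin{equation*}
\left<y, P_a(u)\right> = \left<y, u\right> - \frac{\left<u,a\right>}{||a||_2^2}\left<y, a\right> = \left<y, u\right>,
\end{equation*}
since $\left<y, a\right> = 0$. This is essentially the only computation needed.

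Given this identity, both inclusions are immediate. For $M \subset \Lambda^*$, take $y \in M$; then $y \in a^\perp$, and for every generator $P_a(u) \in \Lambda$ with $u \in \Z^n$, the identity gives $\left<y, P_a(u)\right> = \left<y, u\right> \in \Z$, so $y \in \Lambda^*$. For $\Lambda^* \subset M$, take $y \in \Lambda^* \subset a^\perp$; then $\left<y, P_a(u)\right> \in \Z$ for every $u \in \Z^n$, which the identity rewrites as $\left<y, u\right> \in \Z$. Self-duality of the standard integer lattice, $(\Z^n)^* = \Z^n$, then forces $y \in \Z^n$, and combined with $y \in a^\perp$ this yields $y \in M$. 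I do not anticipate any real obstacle here; the only bookkeeping subtlety worth flagging is the ambient space in which the dual is taken, which is $a^\perp$ precisely because $\Lambda$ spans that hyperplane.
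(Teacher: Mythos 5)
Your proposal is correct and follows essentially the same route as the paper: the central computation in both is that projecting along $a$ does not change the inner product with a vector orthogonal to $a$, applied to the projected standard basis vectors to force integrality of the coordinates of any $y \in \Lambda^*$. Your explicit remark that the dual is taken inside $a^\perp$ because $\Lambda$ spans that hyperplane is a worthwhile clarification the paper only gestures at, but it does not change the argument.
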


\begin{Lemma}\label{GeneralDeterminantLemma}
Suppose $a = (a_1, a_2, \dots, a_n)$ where $\{a_1, a_2, \dots, a_n\}$ is a set of relatively prime integers.  Let 
\begin{equation*}
\Lambda = P_a\left(\Z^n\right)
\end{equation*}
Then 
\begin{equation*}
\det(\Lambda) = \frac{1}{\sqrt{a_1^2+a_2^2+ \cdots +a_n^2}}
\end{equation*}
\end{Lemma}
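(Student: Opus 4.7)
The plan is to exploit the fact that a primitive integer vector can be extended to a $\Z$-basis of $\Z^n$, and then recover $\det(\Lambda)$ from a simple $n$-dimensional volume computation. Since the entries of $a=(a_1,\dots,a_n)$ are globally coprime, standard lattice theory (via the Hermite or Smith normal form) guarantees the existence of vectors $b_2,\dots,b_n\in\Z^n$ such that $\{a,b_2,\dots,b_n\}$ is a $\Z$-basis of $\Z^n$. In particular the $n$-dimensional parallelepiped $P$ spanned by these vectors has $\vol(P)=1$.

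Next, I would project the $b_j$'s onto $a^{\perp}$: set $c_j=P_a(b_j)=b_j-\frac{\langle b_j,a\rangle}{a_1^2+\cdots+a_n^2}\,a$ for $j=2,\dots,n$. Since $P_a(a)=0$ and $P_a$ sends $\Z^n$ onto $\Lambda$, the set $\{c_2,\dots,c_n\}$ is a $\Z$-basis of $\Lambda$. The key geometric step is then to reinterpret the volume of $P$: replacing each $b_j$ by $c_j$ subtracts a scalar multiple of $a$ from $b_j$, which is a volume-preserving shear, so the parallelepiped $P'$ spanned by $\{a,c_2,\dots,c_n\}$ also satisfies $\vol(P')=1$. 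But $P'$ is a right prism with base the $(n-1)$-parallelepiped spanned by $c_2,\dots,c_n\subset a^{\perp}$ (of $(n-1)$-volume $\det(\Lambda)$) and height $\sqrt{a_1^2+\cdots+a_n^2}$, giving
\[
1=\vol(P')=\sqrt{a_1^2+\cdots+a_n^2}\cdot\det(\Lambda),
\]
from which the claimed formula follows.

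The only real obstacle is the extension of the primitive vector $a$ to a $\Z$-basis of $\Z^n$, which is a classical fact that I would simply cite (e.g.\ from Cassels' text already referenced in the paper) rather than reprove. An alternative route, more in the spirit of the previous lemma, would instead use Lemma \ref{ProjectionLatticeLemma} to identify $\Lambda^{*}$ with $a^{\perp}\cap\Z^n$, form the $n\times n$ matrix $M$ whose first row is $a^{T}$ and whose remaining rows are a basis of $\Lambda^{*}$, and combine the Cauchy--Binet-type identity $\det(M)^2=(a_1^2+\cdots+a_n^2)\det(\Lambda^{*})^2$ with the index computation $|\det(M)|=[\Z^n:\Z a+\Lambda^{*}]=a_1^2+\cdots+a_n^2$, then invoke the duality $\det(\Lambda)\det(\Lambda^{*})=1$ for dual full-rank lattices in $a^{\perp}$. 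Either route gives the stated determinant, but the direct volume argument above is the cleaner one and is what I would write up.
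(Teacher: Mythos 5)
Your main argument is correct, but it takes a genuinely different route from the paper. The paper never extends $a$ to a basis of $\Z^n$; instead it works entirely on the dual side. Using Lemma \ref{ProjectionLatticeLemma} it identifies $\Lambda^{*}$ with $\{x\in\Z^n:\langle x,a\rangle=0\}$, introduces the auxiliary sublattice $\Lambda_1=\{x\in\Z^n:\langle x,a\rangle\equiv 0 \bmod N\}$ with $N=a_1^2+\cdots+a_n^2$ (which has index $N$ in $\Z^n$ precisely because the $a_i$ are coprime, so $\det(\Lambda_1)=N$), shows that a basis of $\Lambda^{*}$ together with $a$ itself is a basis of $\Lambda_1$, and then uses orthogonality of $a$ to the span of $\Lambda^{*}$ to get $\det(\Lambda^{*})\,\|a\|=N$, finishing with the duality $\det(\Lambda)\det(\Lambda^{*})=1$. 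Your direct route --- extend the primitive vector $a$ to a $\Z$-basis $\{a,b_2,\dots,b_n\}$ of $\Z^n$, project the $b_j$ to get a basis $\{c_2,\dots,c_n\}$ of $\Lambda$, observe that the shear $b_j\mapsto c_j$ preserves the unit volume of the fundamental parallelepiped, and read off $1=\|a\|\cdot\det(\Lambda)$ from the prism decomposition --- computes $\det(\Lambda)$ without ever mentioning $\Lambda^{*}$, $\Lambda_1$, or Lemma \ref{ProjectionLatticeLemma}. Both proofs use primitivity of $a$ in an essential but different way: the paper to show the index of $\Lambda_1$ in $\Z^n$ is exactly $N$, you to invoke the classical fact that a primitive vector extends to a $\Z$-basis. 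Your version is shorter and more self-contained for this lemma alone; the paper's version has the side benefit of establishing the dual-lattice description $\Lambda^{*}=a^{\perp}\cap\Z^n$, which is of independent interest. Your sketched second route is essentially the paper's argument, so you correctly anticipated it as an alternative.
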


\begin{proof}[Proof of Lemma \ref{ProjectionLatticeLemma}]
Suppose $a \in \Z^n$ and $\Lambda = P_a\left(\Z^n\right)$.  Suppose $y \in  \{x \in \Z^n: \left<x,a \right> = 0\}$.  Pick any $z \in \Lambda$.  Then we know that $z = x- \frac{\left<x,a\right>}{\left<a,a\right>} a$ for some $x \in \Z^n$.  Note that
\begin{equation*}
\left<z,y\right>  = \left< x- \frac{\left<x,a\right>}{\left<a,a\right>} a,y\right> = \left<x,y\right> \in \Z
\end{equation*}
This proves that $ \{x \in \Z^n: \left<x,a \right> = 0\} \subset \Lambda^*$.

Now suppose that $y \in \Lambda^*$.  Certainly $y$ lives in the same vector space as $\Lambda$, so that $y \in \R^n$ and  $\left<y,a\right> = 0$.  Since $y \in \Lambda^*$, we know that for any $z \in \Lambda$, $\left<y,z \right> \in \Z$.  For $i = 1, 2, \dots, n$, let $z_i$ be the vector which is:
\begin{equation*}
z_i = e_i - \frac{\left<e_i,a\right>}{\left<a,a\right>}a
\end{equation*}
Then note that if $y = (y_1, y_2, \dots, y_n)$ we must have
\begin{equation*}
\left<y,z_i\right> = \left< y, e_i - \frac{\left<e_i,a\right>}{\left<a,a\right>}a \right> = \left<y,e_i\right> = y_i \in \Z
\end{equation*}
for $i = 1, 2, \dots, n$.  This implies that $\Lambda ^* \subset \{x \in \Z^n: \left<x,a \right> = 0\} $, and thus 
\begin{equation*}
\Lambda^* = \{x \in \Z^n: \left<x,a \right> = 0\}
\end{equation*}

\end{proof}

\begin{proof}[Proof of Lemma \ref{GeneralDeterminantLemma}]
Suppose $a = (a_1, a_2, \dots, a_n)$ where $\{a_1, a_2, \dots, a_n\}$ is a set of relatively prime integers and let 
\begin{equation*}
\Lambda = P_a\left(\Z^n\right)
\end{equation*}
From Lemma \ref{ProjectionLatticeLemma} we know that $\Lambda^* = \{x \in \Z^n: \left<x,a\right> = 0\}$.  It is also well-known in the theory of lattices that for any lattice $\Lambda$, $\det(\Lambda)\det(\Lambda^*) = 1$.  (See, for example, Chapter VII section 7 of \cite{MR1940576}).  Thus, to prove our Lemma, we need only prove that 
\begin{equation*}
\det(\Lambda^*) = \sqrt{a_1^2+a_2^2+ \cdots +a_n^2}
\end{equation*}
We shall prove this, although we note that the proof is equivalent to solving problem number 4 in Chapter VII, section 2 of \cite{MR1940576}.  

First we let $N = a_1^2+a_2^2+ \cdots + a_n^2$ and define 
\begin{equation*}
\Lambda_1 = \{x \in \Z^d: \left<x, a\right> \equiv 0 \mod N\}
\end{equation*}
It is clear that $\Lambda_1$ is a sublattice of $\Z^n$, and since the numbers $a_1, a_2, \dots, a_n$ are relatively prime, the number of cosets of $\Lambda_1$ in $\Z^n$ is $n$.  From lattice theory (see Theorem 2.5 in Chapter VII, section 2 of \cite{MR1940576}), this implies that
\begin{equation*}
\det(\Lambda_1) = N \det(\Z^n) = N
\end{equation*}
Now suppose that $u_1, u_2, \dots, u_{n-1}$ is a basis for $\Lambda^*$.  We claim that $u_1, u_2, \dots, u_{n-1}, a$ is a basis for $\Lambda_1$. 

Indeed, if we write $x = \beta a +  \sum_{i=1}^{n-1}\beta_i u_i$ for integers $\beta, \beta_1, \beta_2, \dots, \beta_{n-1}$, then clearly $x \in \Z^n$ and 
\begin{equation*}
\left<x, a\right> = \beta\langle a,a\rangle + \sum_{i=1}^{n-1}\beta_i \left<u_i,a\right> = \beta N \equiv 0 \mod N
\end{equation*}
This implies that the integer span of $u_1, u_2, \dots, u_n, a$ is in $\Lambda_1$. 

Now pick $x \in \Lambda_1$.  Since $u_1, u_2, \dots, u_{n-1}$ is a basis for $\Lambda^*$ and $a$ is not in $\Lambda^*$, we know that the vectors $u_1, u_2, \dots, u_{n-1}, a$ span all of $\R^n$ so that we can write 
\begin{equation*}
x = \gamma a + \sum_{i=1}^{n-1} \gamma_i u_i
\end{equation*}
for some \emph{real numbers} $\gamma, \gamma_1, \gamma_2, \dots, \gamma_{n-1}$.  It remains to show that in fact $\gamma, \gamma_1, \dots, \gamma_n$ are in fact all integers.  Note that
\begin{equation*}
\left<x, a\right> = \beta\left<a,a\right> = \beta N \equiv 0 \mod N
\end{equation*}
This implies that $\beta \in \Z$.  Thus, we can see that
\begin{equation*}
\sum_{i=1}^{n-1} \beta_i u_i = x-\frac{\left<x,a\right>}{\left<a,a\right>}a = x-\beta a
\end{equation*}
where $\beta \in \Z$.  Thus, we can see that $\sum_{i=1}^{n-1} \beta_i u_i$ has integer coordinates and thus is in $\Lambda^*$.  By the definition of lattice basis, we must have $\beta_1, \beta_2, \dots, \beta_{n-1}$ all integers.  Thus we have proved that $u_1, u_2, \dots, u_{n-1}, a$ is a basis for $\Lambda_1$.

Finally, recall that the determinant of a lattice is the volume of the fundamental parallelepiped of that lattice.  Specifically, the volume of the $(n-1)$-dimensional parallelepiped defined by the vectors $u_1, u_2, \dots, u_{n-1}$ is the determinant of $\Lambda^*$ while the volume of the $n$-dimensional parallelepiped defined by the vectors $u_1, u_2, \dots, u_{n-1}, a$ is the determinant of the lattice $\Lambda_1$.  Let $P$ be the $(n-1)$-dimensional parallelepiped defined by vectors $u_1, u_2, \dots, u_{n-1}$.  Since $a$ is perpendicular to the span of $u_1, u_2, \dots, u_{n-1}$, we must have:
\begin{align*}
\text{Vol}(P)||a|| &= \text{Volume of parallelepiped defined by vectors}\{u_1, u_2, \dots, u_{n-1}, a\} \\
\det(\Lambda^*) ||a|| &= \det(\Lambda_1) \\
\det(\Lambda^*)\sqrt{N} &= N
\end{align*}
so that we have now shown that $\det(\Lambda^*) = \sqrt{N} = \sqrt{a_1^2+a_2^2+ \cdots + a_n^2}$.
\end{proof}

We can now see that in equation \eqref{EqnWithConstant}, we can take $c_i = ||v_i||$ for each $i=1, 2, \dots, k$.  This, along with Lemma \ref{LinearityLemma}, gives us the following more beautiful definition of the boundary function $b$:  For $A \subset \R^n$,
\begin{align*}
b(A)&  = \sum_{i=1}^k c_i(D_{u_i}(A) + D_{-u_i}(A)) \\
&= \sum_{i=1}^k ||v_i||\left(D_{u_i}(A) + D_{-u_i}(A) \right) \\
&= D_{Z}(A)
\end{align*}
where $Z$ is the zonotope from Theorem \ref{DiscToContTheorem}:
\begin{equation*}
Z = \sum_{i=1}^k \left(-\ell_i + \ell_i\right)
\end{equation*}

This completes the proof of Lemma 1.

\begin{Remark}\label{LimitingRemark}
We expect the solution $Z$ for the continuous isoperimetric problem on $\R^n$ using boundary function $b$ will give us the set $K$ for a PL graph whose edge isoperimetric inequality has a limiting solution.  Indeed, without loss of generality we can scale $K$ so that $\mu_n(K) = \mu_n(Z)$.  If $Z$ is not homothetic to $K$, we must have $b(Z) < b(K)$.  From equation \eqref{GeneralBoundaryEquations}, this implies that $\partial_e(\alpha Z) < \partial_e(\alpha K) $ for  $\alpha>0$ large enough.  This would be a contradiction if, for large enough $\alpha$, we ever had $|\Z^n \cap \alpha Z | = |\Z^n \cap \alpha K|$.  We note that, for all of our examples in Section \ref{KnownSection}, the PL graphs do have limiting solutions $K$ which are the same as the continuous solution $Z$.

\end{Remark}

\subsection{Proof of Continuous Problem}

Now, with the help of the Brunn-Minkowski inequality \cite{MR1898210}, we can prove Theorem \ref{DiscToContTheorem}.  The Brunn-Minkowski inequality can be stated as follows:
\begin{Theorem}[Brunn-Minkowski Inequality]
Suppose $A$ and $B$ are nonempty convex bodies in $\R^n$.  Then
\begin{equation*}
\mu_n(A+B)^{1/n} \geq \mu_n(A)^{1/n} + \mu_n(B)^{1/n}
\end{equation*}
with equality true if and only if $A$ and $B$ are nomothetic.
\end{Theorem}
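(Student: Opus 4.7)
The plan is to prove the Brunn-Minkowski inequality by the classical three-stage argument: first for axis-parallel boxes via AM-GM, then for finite unions of such boxes with pairwise disjoint interiors by a hyperplane-splitting induction, and finally for arbitrary convex bodies by approximation. The main difficulty will lie in the equality case.

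First I would dispatch the case where $A = \prod_{i=1}^n [0,a_i]$ and $B = \prod_{i=1}^n [0,b_i]$ are axis-parallel boxes, so that $A+B = \prod_{i=1}^n [0,a_i+b_i]$. Dividing the desired inequality by $\mu_n(A+B)^{1/n}$ reduces it to
\begin{equation*}
\prod_{i=1}^n\left(\frac{a_i}{a_i+b_i}\right)^{1/n} + \prod_{i=1}^n\left(\frac{b_i}{a_i+b_i}\right)^{1/n} \leq 1,
\end{equation*}
which follows from AM-GM applied separately to each product: the two resulting arithmetic means add to $\frac{1}{n}\sum_{i=1}^n (a_i+b_i)/(a_i+b_i) = 1$.

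Next I would extend to the case where $A$ and $B$ are each finite unions of axis-parallel boxes with pairwise disjoint interiors, by induction on the total number $N$ of boxes. In the inductive step, one of $A$ or $B$, say $A$, contains $\geq 2$ boxes; picking a coordinate hyperplane $H$ that separates two of them yields $A = A^+ \cup A^-$, each piece made of strictly fewer boxes. I would then translate $B$ so that a parallel coordinate hyperplane splits it as $B = B^+ \cup B^-$ with $\mu_n(B^\pm)/\mu_n(B) = \mu_n(A^\pm)/\mu_n(A)$, and observe that each original box of $B$ contributes at most one box to $B^+$ and one to $B^-$, so the box totals of $(A^+, B^+)$ and $(A^-, B^-)$ are each at most $N-1$. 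Since the interiors of $A^+ + B^+$ and $A^- + B^-$ lie in opposite halfspaces, one has $\mu_n(A+B) \geq \mu_n(A^+ + B^+) + \mu_n(A^- + B^-)$; applying the inductive hypothesis to each summand and using the matched volume ratios collapses the right-hand side to $(\mu_n(A)^{1/n} + \mu_n(B)^{1/n})^n$.

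Finally I would approximate arbitrary nonempty convex bodies from the inside by finite unions of dyadic cubes and pass to the limit, using joint continuity of Lebesgue measure and Minkowski addition on compact sets. The main obstacle is the equality characterization. By homogeneity of $\mu_n$ I can rescale to $\mu_n(A) = \mu_n(B)$, at which point equality forces equality in every AM-GM step of the box argument, and this propagates through the induction and approximation to force the ratios between $A$ and $B$ to be constant, i.e.\ $A$ and $B$ homothetic. A more intrinsic route to the equality case is to combine the inequality with Minkowski's theorem on mixed volumes (Theorem \ref{MixedVolumeThm}): equality in Brunn-Minkowski is equivalent to $t \mapsto \mu_n((1-t)A + tB)^{1/n}$ being affine on $[0,1]$, which via the mixed-volume expansion holds only when $A$ and $B$ are homothetic.
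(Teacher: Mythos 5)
The paper does not actually prove this theorem: it is quoted as a classical result with a citation to the literature, so there is no proof of record to compare yours against. Your argument for the inequality itself is the standard Hadwiger--Ohmann proof (boxes via AM--GM, finite unions of boxes by hyperplane-splitting induction with matched volume ratios, then inner approximation), and that part is correct as outlined; the box count in the inductive step and the superadditivity $\mu_n(A+B) \geq \mu_n(A^+ + B^+) + \mu_n(A^- + B^-)$ are exactly right.

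The genuine gap is in the equality case, which is the part of the theorem the paper actually leans on (Theorem \ref{DiscToContTheorem} characterizes its minimizers via it). Your first route --- ``equality forces equality in every AM--GM step \dots{} and this propagates through the induction and approximation'' --- does not work: equality for the limit bodies $A$ and $B$ gives no information about the approximating unions of boxes, for which equality essentially never holds (two unions of finitely many boxes are not homothetic convex bodies), so there is nothing to propagate backwards through the limit. Your second route is circular as stated: affineness of $t \mapsto \mu_n((1-t)A+tB)^{1/n}$ does follow from equality (a concave function touching its chord at an interior point is affine), and the mixed-volume expansion then yields $V(A,\dots,A,B)^n = \mu_n(A)^{n-1}\mu_n(B)$; but that is precisely the equality case of Minkowski's first inequality, whose standard proof is deduced \emph{from} the Brunn--Minkowski equality case, not the other way around. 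Theorem \ref{MixedVolumeThm} as recorded in the paper gives only the polynomial expansion and says nothing about when such equalities force homothety. To close the gap you would need an independent argument --- e.g.\ Brunn's original symmetrization approach, the equality analysis in the Pr\'ekopa--Leindler inequality, or the treatment in Schneider's book --- or simply cite the equality characterization, as the paper does.
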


This now allows us to now prove Theorem \ref{DiscToContTheorem}.

\begin{proof}[Proof of Theorem \ref{DiscToContTheorem}]
Suppose $G=(V,E)$ is a PL graph with edge segments $\ell_1, \ell_2, \dots, \ell_k$.  Let 
\begin{equation*}
Z = \sum_{i=1}^k \left(-\ell_i + \ell_i\right)
\end{equation*}
From the discussion above, the sets of optimum edge boundary $S_{k_n}$ have continuous counterparts $S_{k_n}'$ which are convex bodies.  Thus, to solve the continuous isoperimetric problem, we look for convex bodies $A \subset \R^n$ of fixed volume with minimum boundary $b(A)$.
  
Recall that we define 
\begin{equation*}
b(A) =D_Z(A) =  \lim_{\epsilon \to 0^+} \frac{\mu_n\left(A+ \epsilon Z \right) -\mu_n(A)}{\epsilon}
\end{equation*}
Using the Brunn-Minkowski inequality, we have
\begin{align}
b(A) &= \lim_{\epsilon \to 0} \frac{\mu_n\left(A+ \epsilon Z \right) - \mu_n(A)}{\epsilon}      \nonumber  \\
 & \geq \lim_{\epsilon \to 0} \frac{\left(\mu_n(A)^{1/n} + \epsilon \mu_n(Z)^{1/n}\right)^n - \mu_n(A)}{\epsilon}    \nonumber \\
&= n \mu_n(A)^{(n-1)/n} \mu_n(Z)^{1/n}  \label{BrunnMwithb}
\end{align}
with equality if and only if $A$ is a translate of a scalar multiple of $Z$.

We can also calculate 
\begin{align*}
b(Z) &= \lim_{\epsilon \to 0} \frac{\mu_n\left(Z+ \epsilon Z \right) - \mu_n(Z)}{\epsilon} \\
&= \lim_{\epsilon \to 0} \frac{\mu_n\left( (1+\epsilon) Z\right) - \mu_n(Z)}{\epsilon} \\
&= \lim_{\epsilon \to 0} \frac{(1+\epsilon)^n\mu_n(Z) - \mu_n(Z)}{\epsilon} \\
&= n \mu_n(Z)
\end{align*}

Substituting this into equation \eqref{BrunnMwithb}, we have
\begin{align*}
b(A) &\geq \frac{b(Z)}{\mu_n(Z)} \mu_n(A)^{(n-1)/n}\mu_n(Z)^{1/n} \\
\frac{b(A)}{b(Z)} & \geq \frac{ \mu_n(A)^{(n-1)/n}}{\mu_n(Z)^{(n-1)/n}} \\
\left(\frac{\mu_n(A)}{\mu_n(Z)}\right)^{1/n} &\leq \left(\frac{b(A)}{b(Z)} \right)^{1/(n-1)}
\end{align*}
with equality if and only if $A$ is a translate of a scalar multiple of $Z$.  

\end{proof}

We note that this proof is essentially the same as a proof for the Euclidean isoperimetric inequality, with $Z$ replaced by the Euclidean ball.

\section{Using the Continuous Technique for Known Isoperimetric Problems}\label{KnownSection}

\begin{Example}
Our technique for translating a discrete isoperimetric inequality into a continuous one can be applied to any PL graph.  One such graph that has been previously studied is the graph $(\Z^n, E_1)$ where $E_1$ denotes the set of edges which connects any pair of integer points whose $\ell_1$-distance is 1:
\begin{equation*}
E_1 = \{(x,y) \in \Z^n \times \Z^n: ||x-y||_1 = 1\}
\end{equation*}
where, as usual, for $x = (x_1, x_2, \dots, x_n), y = (y_1, y_2, \dots, y_n)$, we have
\begin{equation*}
||x-y ||_1 = \sum_{i=1}^n |x_i-y_i|
\end{equation*}
Bollob\'as and Leader studied the edge-isoperimetric inequality on this graph (and others) in \cite{MR1137765}.  While they used the corresponding continuous isoperimetric problem to solve their main discrete result in the paper, for the PL graph described above they used discrete methods.  In \cite{MR1137765}, Bollob\'as and Leader show that sets of minimal boundary in $(\Z^n, E_1)$  of size $s^n$ for $s \in \Z$ are boxes:
\begin{equation*}
[s]^n = \{(s_1, s_2, \dots, s_n): s_i \in \Z, 0 \leq s_i \leq s \text{ for }i = 1, 2, \dots, n\}
\end{equation*}
(Note that this easily shows that this graph satisfies the assumption listed in Subsection \ref{AssumptionsSubSection}.)

Using our continuous method, we see that the edges in this graph correspond to vectors $v_i= e_i$, $i=1, 2, \dots, n$, where $e_i$ is the $i$th standard basis vector whose entries are all 0 except the $i$th, which is 1.  Thus, letting $[-e_i,e_i]$ denote the line segment from $-e_i$ to $e_i$, our method would predict the sets of minimum boundary to have the shape of the zonotope
\begin{equation*}
\sum_{i=1}^n [-e_i,e_i]
\end{equation*}
which is, as expected, a box.
\end{Example}

\begin{Example}
Here, we consider the triangular lattice  in $\R^2$.  That is, we tile the plane with equilateral triangles, and from this we get a graph $T_2$ whose vertices are the vertices of each triangle, and edges are the edges of each triangle.

\begin{figure}[h]
\centering
\includegraphics[width=2in]{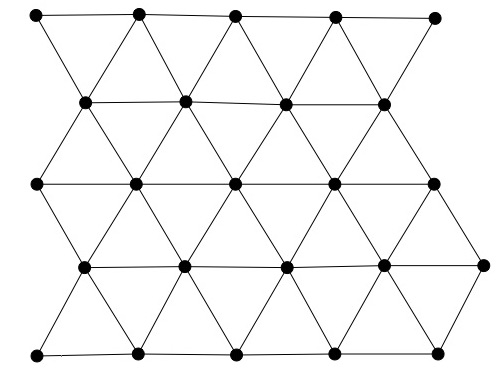}
\caption{\label{hex}}{Subgraph of $T_2$}
\end{figure}
We note that the edge-isoperimetric problem of this graph is of interest in the study of the emergence of the Wulff shape in the crystallization problem \cite{DavPioSte15}.

Using a linear transformation, this graph is isomorphic to a PL graph, so our technique will work for this graph.

 According to \cite{MR2035509}, the solutions are nested and for $|S|=1+3r(r+1)$ they are of the form
\[
B_r=\{v \in V \, : \, d(v_0,v) \leq r\}.
\]
(Again showing that this graph satisfies the assumption listed in Subsection \ref{AssumptionsSubSection}.)

From our technique, the limit zonotope is given by the sum of the edges:
\[
Z=\sum_{j=0}^5 [\vec{0},e^{ij \frac{\pi}{3}}]
\]
which is the regular hexagon, consistent with $B_r$.
\end{Example}

\section{Using the Continuous Technique on some New Graphs}\label{UnknownSection}

We can apply our continuous technique to any PL graph in order to give an idea of what the shapes of the sets of minimum edge boundary should look like for sets of large volume.  Here we apply this technique to two graphs whose edge-isoperimetric inequalities are not yet known.  For both of these examples, the solutions for the continuous case are fairly ``complicated,'' suggesting that finding these sets using discrete methods only would be quite difficult.

\begin{Example}

There exists a tessellation of $\R^4$ using 4-dimensional crosspolytopes; see \cite{MR0027148} for details.  One can translate this tessellation into a graph living in $\R^4$ as follows:  the 0-dimensional faces of these crosspolytopes become vertices and the 1-dimensional faces become edges.  The vertices of this graph are the points 
\begin{equation*}
(x_1, x_2, x_3, x_4) \in \Z^4  \quad \text{ such that } \quad x_1+x_2+x_3+x_4 \equiv 0 \mod 2
\end{equation*}
and the edges involving any vertex $v$ are of the form
\begin{align*}
&(v,v+e_1) \quad \text{ where } e_1 \text{ is any permutation of } (1,-1,0,0) \\
\text{ and }  &(v, v\pm e_2) \quad \text{ where } e_2 \text{ is any permutation of } (1,1,0,0) 
\end{align*}

In \cite{MR2035509}, Harper mentions that the isoperimetric inequality for this graph is unknown.  Note that this graph is isomorphic to a PL graph, so that we can use our technique to see that the limiting optimal shape should be the following zonotope:
\begin{equation*}
Z_C = \sum_{i=1}^{12} \left[\vec{0}, \pi_i(1,-1,0,0) \right] + \sum_{j=1}^6 \left[ \vec{0}, \phi_j(1,1,0,0)\right] + \sum_{k=1}^6 \left[\vec{0}, \psi_k(-1,-1,0,0)\right]
\end{equation*}
where the $\pi_i$s are the distinguishable permutations of $(1,-1,0,0)$, the $\phi_j$s are the distinguishable permutations of $(1,1,0,0)$, the $\psi_k$s are the distinguishable permutations of $(-1,-1,0,0)$, and $[\vec{0}, v]$ indicates the line segment from the origin to $v$.

Using the online version of polymake \cite{polymake} which can be found here: \newline
https://polymake.org/doku.php/boxdoc \newline
 we were able to find that the vertices of this zonotope are all coordinate permutations and sign combinations of (0,2,4,6) and this polytope has $f$-vector (192, 384, 240, 48).  (Thus, this limiting shape is apparently a truncated 24-cell \cite{Wiki24Cell}).  
 
 One might have guessed, given the definition of $Z_C$ as a zonotope, that it has facets corresponding to 
 \begin{equation*}
 \{x \in \R^4:\left<x,v\right> \leq c\} \quad \text{ where } v \text{ is a permutation of } (1,-1,0,0), (1,1,0,0), \text{ or } (-1,-1,0,0)
 \end{equation*}
 And it \emph{does} have all of those facets (according to polymake) with $c = 20$.  But it also has 24 other facets, corresponding to 
 \begin{align*}
 \{x \in \R^4: \left<x,u\right> \leq 12\}&  \quad \text{ where } u \text{ is a permutation of } (1,0,0,0) \text{ or } (-1,0,0,0) \\
 \text{and } \{x \in \R^4: \left<x,w\right> \leq 24\} & \quad \text{ where } w \text{ is one of } (\pm 1, \pm 1, \pm 1, \pm 1)
 \end{align*}
This shape is complicated enough that it likely would be quite difficult to find using only discrete methods.

\end{Example}

\begin{Example}
We also apply our technique to the edge-isoperimetric problem for a graph whose vertex-isoperimetric problem was recently solved in \cite{MR2946103}.  Here, we denote this graph by $(\Z^n, E_\infty)$.  Its vertices are $\Z^n$ and any two vertices whose $\ell_\infty$ distance is 1 have an edge between them:
\begin{equation*}
E_\infty = \{(x,y)\in \Z^n \times \Z^n: ||x-y||_\infty = 1\}
\end{equation*}
where, as usual, for $x = (x_1, x_2, \dots, x_n), y = (y_1, y_2, \dots, y_n)$, we have
\begin{equation*}
||x-y||_\infty = \max_{i=1, 2, \dots, n} |x_i-y_i|
\end{equation*}

This graph is clearly a PL graph such that the edges involving $v \in \Z^n$ are:
\begin{equation*}
(v, v+\epsilon) \quad \text{ where } \epsilon \in \{-1,0,1\}^n, \epsilon \not= \vec{0}
\end{equation*}

Thus, from our technique, the sets of minimum edge boundary should have the shape of
\begin{equation*}
Z_n =  \sum_{\substack{\epsilon \in \{-1,0,1\}^n \\ \epsilon \not= \vec{0}}}  \left[\vec{0},\epsilon\right]
\end{equation*}
where $\left[\vec{0},\epsilon\right]$ is, as usual, the line segment from the origin to $\epsilon$.

\end{Example}

These shapes can be seen in Figure \ref{zonotopes} (courtesy of Polymake \cite{polymake} and Sage \cite{sage}) for $n=2$ and $n=3$.

\begin{figure}[h]
\begin{center}
\subfigure[$n = 2$]{\label{zonotope2}\includegraphics[width=2 in]{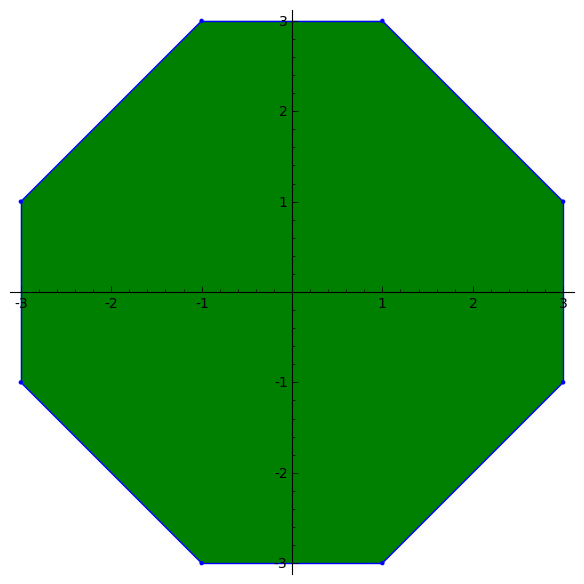}}
\hspace{1 in} \subfigure[$n = 3$]{\label{zonotope3}\includegraphics[width=2.7 in]{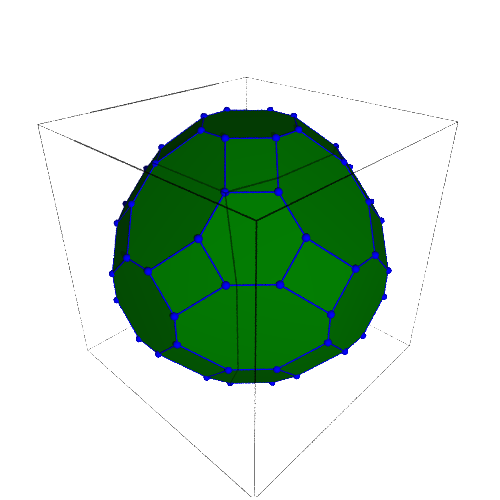}}
\end{center}
\caption{Optimal zonotopes for $(\Z^n, E_\infty)$ in dimensions 2 and 3.}
\label{zonotopes}
\end{figure}

From Polymake \cite{polymake}, we found that the f-vectors of $Z_2, Z_3$, and $Z_4$ were (8,8), (96, 144, 50), and (5376, 11328, 7312, 1360) respectively.

In the case of $n = 2$, it is not hard to argue that for a fixed boundary, the shape of largest volume lies inside a polygon defined by 8 facets:
\begin{align*}
\left<x,(1,0)\right> & \leq c_1 & \left<x,(0,1)\right> & \leq c_2  & \left<x,(1,1)\right> & \leq c_3 & \left<x,(1,-1)\right> & \leq c_4 \\
\left<x,(-1,0)\right> & \leq c_5 & \left<x,(0,-1)\right> & \leq c_6  & \left<x,(-1,-1)\right> & \leq c_7 & \left<x,(-1,-)\right> & \leq c_8
\end{align*}

Using precise boundary calculations and discrete volume calculations (from Pick's Theorem, which can be found in Section 2, Chapter VII of \cite{MR1940576}) one can show that the optimal sets in the discrete 2-dimensional case are indeed growing octagons, and thus are nested.  Moreover, as expected, the shape from picture \ref{zonotope2} appears periodically in the optimal discrete sets of growing volume (whenever it can be achieved with a particular discrete volume).  Thus, one might predict that the sets in higher dimensions are also nested. 

Frequently, when a graph is defined for any dimension $n$ such as this one is, and the sets of optimum boundary are nested, one can use the technique of compression to prove the discrete isoperimetric inequality \cite{MR2035509}.  This technique requires that sections of the optimal set in dimension $n$ which are perpendicular to the coordinate axes are optimal sets in dimension $n-1$.  Interestingly, this is \emph{not} the case here.  In the Proposition below, we show that sections of $Z_n$ which are perpendicular to the coordinate axes, and either through the origin or on the boundary \emph{are} in the shape of  $Z_{n-1}$.  However, Sage \cite{sage} can show us that already the section of $Z_3$ through the hyperplane $x = 3$ is \emph{not} $Z_2$ (since it is not an octagon), see Figure \ref{SectionFigure}.  
\begin{figure}[h]
\begin{center}
\includegraphics[width=2 in]{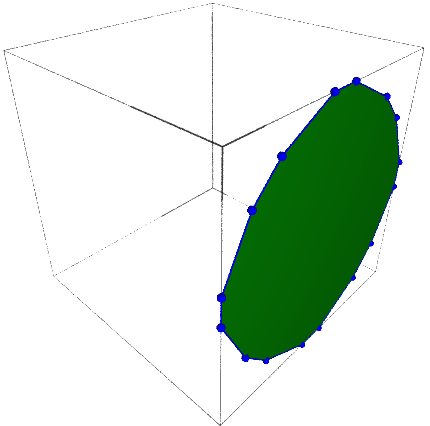}
\end{center}
\caption{Section of $Z_2$ through the plane $x = 3$}
\label{SectionFigure}
\end{figure}

\begin{Proposition}
\begin{enumerate}
\item Let $e_i$ be a standard basis vector and let $F_i$ be the face of $Z_n$ defined as follows:
\begin{equation*}
F_i = \{z \in Z_n: \left<e_i,z\right> = \text{max}\{\left<e_i,x\right>: x \in Z_n\}\}
\end{equation*}
Then $F_i$ is a translation of the zonotope $Z_{n-1}$.

\item Additionally, define $X$ to be the intersection of $Z_{n-1}$ with the hyperplane consisting of all points whose $i$th coordinate is 0:
\begin{equation*}
X = \{z \in Z_n: \left<e_i,x\right> = 0\}
\end{equation*}
Then $X$ is the set $3Z_{n-1}$ embedded into that hyperplane.  
\end{enumerate}
\end{Proposition}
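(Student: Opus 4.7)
Take $i=1$ throughout; the general case is identical by symmetry. Both parts rest on partitioning the summands defining $Z_n = \sum_\epsilon [\vec{0},\epsilon]$ according to the first coordinate $\epsilon_1 \in \{-1,0,1\}$: for each $\tilde\epsilon=(\epsilon_2,\ldots,\epsilon_n) \in \{-1,0,1\}^{n-1}$ there are three lifts $(1,\tilde\epsilon),(0,\tilde\epsilon),(-1,\tilde\epsilon)$, except that $\tilde\epsilon=\vec{0}$ has only the two lifts $(\pm 1,\vec{0})$. For Part (1), I will invoke the standard fact that the face of a Minkowski sum in a direction $u$ equals the Minkowski sum of the faces of each summand. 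With $u=e_1$, each segment $[\vec{0},\epsilon]$ contributes $\{\epsilon\}$ if $\epsilon_1=1$, $\{\vec{0}\}$ if $\epsilon_1=-1$, or the full segment if $\epsilon_1=0$. The first two groups add to a single vector $t = \sum_{\epsilon:\,\epsilon_1=1}\epsilon$, while the $\epsilon_1=0$ group yields $\sum_{\tilde\epsilon\ne\vec{0}}[\vec{0},(0,\tilde\epsilon)]$, which is exactly $Z_{n-1}$ embedded in the hyperplane $\{x_1=0\}$. Hence $F_1 = t + Z_{n-1}$, as claimed.

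For Part (2), I plan to prove $X = 3Z_{n-1}$ by a two-sided containment using different ideas for each direction. For $X \subseteq 3Z_{n-1}$, let $\pi_1$ be the projection dropping the first coordinate. Each $[\vec{0},\epsilon]$ projects to $[\vec{0},\tilde\epsilon]$; since every nonzero $\tilde\epsilon$ is hit by exactly three $\epsilon$'s while the two segments with $\tilde\epsilon=\vec{0}$ project to $\{\vec{0}\}$, we obtain $\pi_1(Z_n) = 3Z_{n-1}$. Any $z \in X$ has zero first coordinate and so equals $\pi_1(z) \in 3Z_{n-1}$. For the reverse direction, given $y \in Z_{n-1}$ written as $y = \sum_{\tilde\epsilon \ne \vec{0}} s_{\tilde\epsilon}\, \tilde\epsilon$ with $s_{\tilde\epsilon} \in [0,1]$, I'll build a point of $Z_n$ by taking parameter $s_{\tilde\epsilon}$ on each of the three lifts of $\tilde\epsilon$ and parameter $0$ on the two segments $[\vec{0},(\pm 1,\vec{0})]$. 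For every $\tilde\epsilon \ne \vec{0}$ the $(1,\tilde\epsilon)$ and $(-1,\tilde\epsilon)$ contributions to the first coordinate cancel, so the resulting point has first coordinate $0$; summing the remaining coordinates yields $3y$. Thus $(0,3y) \in X$, giving $3Z_{n-1} \subseteq X$.

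Both arguments amount to bookkeeping of the Minkowski-sum description of $Z_n$ once the grouping of summands by $\epsilon_1$ is in place. The one conceptual point to spot is that the two containments in Part (2) call for different techniques — a projection for $\subseteq$, an explicit balanced choice of parameters for $\supseteq$ — with both hinging on the fact that each nonzero vector of $\{-1,0,1\}^{n-1}$ has exactly three lifts to $\{-1,0,1\}^n$. I do not foresee any significant technical obstacle.
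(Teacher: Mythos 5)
Your proof is correct and follows essentially the same route as the paper's: both parts rest on partitioning the generators of $Z_n$ by the value of $\epsilon_1$ and on the fact that each nonzero $\tilde\epsilon\in\{-1,0,1\}^{n-1}$ has exactly three lifts to $\{-1,0,1\}^n$. Your two refinements --- citing the face-of-a-Minkowski-sum fact in part (1) rather than verifying it by hand, and deducing $X\subseteq 3Z_{n-1}$ from $\pi_1(Z_n)=3Z_{n-1}$ rather than regrouping coefficients after writing $\epsilon=\pm e_1+(\epsilon\mp e_1)$ --- are streamlinings of the same computation, not a genuinely different argument.
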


\begin{proof}
Let $x \in F_i$ for the set $F_i$  as defined above.  For ease of notation, say $i=1$.   Define
\begin{align*}
E_{-1} &=\{ \epsilon = (\epsilon_1, \epsilon_2, \dots, \epsilon_n) \in \{-1,0,1\}^n : \epsilon_1 = -1\} \\
E_{0} &= \{\epsilon = (\epsilon_1, \epsilon_2, \dots, \epsilon_n) \in \{-1,0,1\}^n : \epsilon_1 = 0\}\\
E_{1} &= \{\epsilon = (\epsilon_1, \epsilon_2, \dots, \epsilon_n) \in \{-1,0,1\}^n : \epsilon_1 = 1 \}\\
\end{align*}

 Since $x \in Z_n$, we know that we can write
\begin{equation}\label{FacetEquation}
x= \sum_{\substack{\epsilon \in \{-1,0,1\}^n  \\ \epsilon \not= \vec{0}}} \lambda_\epsilon \epsilon
\end{equation}
where  $0 \leq \lambda_\epsilon \leq 1$.  Since $x \in F_1$, we know that the first coordinate of $x$ must be as large as possible; that is, it must be $3^{n-1}$.  This implies that
\begin{equation}\label{FacetEquation2}
x = \sum_{\epsilon \in E_1} \epsilon + \sum_{\epsilon \in E_0} \lambda_\epsilon \epsilon
\end{equation}
where $0 \leq \lambda_\epsilon \leq 1$.  That is, in the expression \eqref{FacetEquation}, if $\epsilon \in E_1$ then $\lambda_\epsilon = 1$; if $\epsilon \in E_{-1}$  then $\lambda_\epsilon = 0$, and if $\epsilon \in E_0$ then $\lambda_\epsilon$ may be anything between 0 and 1.  Thus, if we define $Z$ as $Z_{n-1}$ embedded into the hyperplane of $\R^n$ consisting of all vectors with first coordinate 0, then this shows 
\begin{equation*}
F_1 \subset Z + v
\end{equation*}
where $v =  \sum_{\epsilon \in E_1} \epsilon $.  

It is also clear that any $x$ that can be expressed as in equation \eqref{FacetEquation2} must also be in $F_1$.  This implies that $Z+v \subset F_1$, and we have proved our first claim.  

Define $X$ as above,  let $x \in X$, and again for ease assume $i = 1$.  Then we can write 
\begin{align*}
x &= \sum_{\substack{\epsilon \in \{-1,0,1\}^n  \\ \epsilon \not= \vec{0}}} \lambda_\epsilon \epsilon \\
&= \sum_{\epsilon \in E_0} \lambda_\epsilon \epsilon +  \sum_{\epsilon \in E_{-1}} \lambda_\epsilon \epsilon +  \sum_{\epsilon \in E_1} \lambda_\epsilon \epsilon \\
&=  \sum_{\epsilon \in E_0} \lambda_\epsilon \epsilon +  \sum_{\epsilon \in E_{-1}} \lambda_\epsilon (-e_1 + (\epsilon+e_1)) +  \sum_{\epsilon \in E_1} \lambda_\epsilon (e_1+(\epsilon-e_1)) \\
\end{align*}
Since we know that $\left<x,e_1\right> = 0$, we must have 
\begin{equation}\label{SectionEquation}
\sum_{\epsilon \in E_0} \lambda_\epsilon \epsilon +  \sum_{\epsilon \in E_{-1}} \lambda_\epsilon (-e_1 + (\epsilon+e_1)) +  \sum_{\epsilon \in E_1} \lambda_\epsilon (e_1+(\epsilon-e_1))= 
\sum_{\epsilon \in E_0} \lambda_\epsilon \epsilon +  \sum_{\epsilon \in E_{-1}} \lambda_\epsilon (\epsilon+e_1) +  \sum_{\epsilon \in E_1} \lambda_\epsilon (\epsilon-e_1)
\end{equation}
Note that each $\epsilon \in E_0$ also appears in the other two sums on the right side of equation \eqref{SectionEquation}.  Thus, grouping the three coefficients for the same $\epsilon \in E_0$, we see that
\begin{equation*}
x= \sum_{\epsilon \in E_0} c_\epsilon \epsilon
\end{equation*}
and we must have $0 \leq c_\epsilon \leq 3$.   Thus, if we define $Z$ as $Z_{n-1}$ embedded into the hyperplane of $\R^n$ consisting of all vectors with first coordinate 0, then this shows 
\begin{equation*}
X \subset 3Z
\end{equation*}
Now suppose that $x \in 3Z$.  That is, we can write
\begin{equation*}
x= \sum_{\epsilon \in E_0} c_\epsilon \epsilon
\end{equation*}
where $0 \leq c_\epsilon \leq 3$.  Define:
\begin{align*}
\lambda_{\epsilon,0} & = 
\begin{cases} c_\epsilon & \text{ if } c_\epsilon \leq 1 \\
1 & \text{ if } c_\epsilon > 1
\end{cases} \\
\lambda_{\epsilon,1} &= 
\begin{cases} 0 & \text{ if } c_\epsilon \leq 1 \\
\frac{c_\epsilon -1}{2} & \text{ if } c_\epsilon > 1
\end{cases} \\
\lambda_{\epsilon,-1} &= 
\begin{cases} 0 & \text{ if } c_\epsilon \leq 1 \\
\frac{c_\epsilon -1}{2} & \text{ if } c_\epsilon > 1
\end{cases}
\end{align*}

Then we can see that clearly $\left<e_1,x \right> = 0$ and 
\begin{equation*}
x= \sum_{\epsilon \in E_0} c_\epsilon \epsilon = \sum_{\epsilon \in E_0} \left(\lambda_{\epsilon,0} \epsilon + \lambda_{\epsilon,1} \left(e_1+\epsilon\right) + \lambda_{\epsilon,-1} \left(-e_1+\epsilon \right)\right)
\end{equation*}
where $0 \leq \lambda_{\epsilon,i} \leq 1$ for $i = -1, 0, 1$.  This shows that $x \in X$.  Thus, we have shown that $3Z \subset X$ and we have proved our second claim.
\end{proof}

The complicated structure of the sets $Z_n$ again suggests that it would be difficult to find these optimal sets using discrete methods alone.

\section*{Acknowledgments}  
The authors would like to thank Alexander Barvinok for his helpful comments.

\bibliographystyle{plain}
\bibliography{EdgeIsoperimetry}

\end{document}